\newtheorem{theorem}{Theorem}[section]
\newtheorem{corollary}[theorem]{Corollary}
\newtheorem{lemma}[theorem]{Lemma}
\newtheorem{proposition}[theorem]{Proposition}
\theoremstyle{definition}
\newtheorem{definition}[theorem]{Definition}
\theoremstyle{remark}
\newtheorem{remark}[theorem]{Remark}
\newcommand{\sgn}{\textrm{sgn}}
\title[Three-dimensional Riemannian Manifolds \dots]{Three-dimensional Riemannian Manifolds Associated with Locally Conformal Riemannian Product Manifolds}
\author{Iva Dokuzova}
\address{Iva Dokuzova\\Department of Algebra and Geometry\\
University of Plovdiv Paisii Hilendarski\\ 24 Tzar Asen, 4000 Plovdiv, Bulgaria}
\email{dokuzova@uni-plovdiv.bg}
\subjclass[2020]{53A05,
53B20, 53C15, 53C25}
\keywords{Riemannian manifold, locally product structure, Einstein manifold, Ricci curvature, hypersurface}
\begin{document}
\begin{abstract}
	A 3-dimensional Riemannian manifold equipped with a tensor structure of type $(1,1)$, whose fourth power is the identity, is considered. This structure acts as an isometry with respect to the metric. A Riemannian almost product manifold associated with such a manifold is also studied. It turns out, that the almost product manifold belongs to the class of locally conformal Riemannian product manifolds of the Naveira classification. Conditions for the additional structures of the manifolds to be parallel with respect to the Levi-Civita connection of the metric were found. Classes of almost Einstein manifolds and Einstein manifolds are determined and some of their curvature properties are obtained. As examples of these manifolds, a hypersurface is considered.
\end{abstract}

\maketitle

%% Citations in the text should be identified by appropriate numbers in square brackets, and consecutive references should be concatenated (e.g. [7, 12-15]).

\section{Introduction}
\label{Sec:1}
Investigating the geometric properties of Riemannian manifolds with additional structures enriches differential geometry and leads to more of its applications.
We study the geometry of Riemannian manifolds equipped with an additional tensor structure, which are associated with Riemannian almost product manifolds. In order to convince the reader of the relevance of our research, we will mention some modern papers on this subject refer to the theory of Riemannian almost product manifolds (\cite{alegre, atceken}, \cite{dokuzova}, \cite{gribacheva}--\cite{griba-mek2}, \cite{pitis}--\cite{S-G}).

 A.~Naveira made a classification of the Riemannian almost product manifolds by the properties of the tensor $\nabla P$, where $\nabla$ is the Levi-Civita connection determined by the metric, and $P$ is the almost product structure (\cite{Nav}). The class of locally product manifolds defined by $\nabla P=0$ in this classification is common to all classes, and in this case the structure $P$ is parallel with respect to $\nabla$.

  We introduce a 3-dimensional Riemannian manifold $(M, g, Q)$. Here $g$ is the metric and $Q$ is a tensor field of type $(1,1)$ whose fourth power is the identity. The local coordinates of $Q$ form a special rotation matrix and $Q$ is compatible with $g$, such that an isometry is induced in every tangent space of $M$. Also, we consider an associated manifold $(M, g, P)$, where $P=Q^{2}$ is an almost product structure. Our purpose is to obtain some geometric properties of $(M, g, Q)$ and $(M, g, P)$. We prove that $(M, g, P)$ is a locally conformal product manifold. We consider an associated metric $\tilde{g}$, determined by $g$ and $P$, and relations between curvature quantities of the Riemannian manifold $(M, g, Q)$ and the pseudo-Riemannian manifold $(M, \tilde{g}, Q)$.

The paper is organized as follows. In Section~\ref{Sec:2}, we give some basic facts about $(M, g, Q)$ and $(M, g, P)$. In Section~\ref{Sec:3}, we get the components of the fundamental tensor $F$ on $(M, g, P)$, determined by the covariant derivative of $\tilde{g}$. We prove that $(M, g, P)$ is a locally conformal Riemannian product manifold. We obtain necessary and sufficient conditions for $Q$ and also for $P$ to be parallel structures. In Section~\ref{Sec:4}, we find a relation between the Ricci tensors on $(M, \tilde{g}, Q)$ and $(M, g, Q)$. In case that $(M, \tilde{g}, Q)$ is a locally Euclidean manifold, then the Ricci tensor on $(M, g, Q)$ is expressed by the metrics $g$ and $\tilde{g}$, as well as $(M, g, Q)$ is an almost Einstein manifold. In Section~\ref{Sec:5}, we introduce special bases in the tangent space $T_{p}M$ of $(M, g, Q)$, which are induced by $Q$. We find the Ricci curvatures in the direction of a non-zero vector and its images by $Q$. Also we obtain sectional curvatures of special 2-planes in $T_{p}M$ for an almost Einstein manifold $(M, g, Q)$. In Section~\ref{Sec:6}, we characterize geometrically an example of $(M, g, Q)$ on a 3-dimensional catenoid embedded in a 4-dimensional Euclidean space.

\section{Preliminaries}
\label{Sec:2}
The general representation of a rotation matrix in $\mathbb{R}^{3}$ (rotation is around a coordinate system axis) is
\begin{equation}\label{rot-matrix}
 Q_{i}= \begin{pmatrix}
    \cos\alpha & -\sin \alpha & 0 \\
    \sin\alpha & \cos\alpha & 0 \\
    0 & 0 & 1 \\
  \end{pmatrix},\quad 0<\alpha\leq 2\pi.
\end{equation}
The equation $Q^{4}=id$ applied to $Q_{i}$ has a set of solutions $\alpha=\{\frac{\pi}{2}; \pi; \frac{3\pi}{2}; 2\pi\}$.
Then the matrix \eqref{rot-matrix} has one of the following forms:
\begin{equation*}%\label{Qi}
\begin{split}
    Q_{1}&=\begin{pmatrix}
      0 & -1 & 0 \\
      1 & 0 & 0 \\
      0 & 0 & 1 \\
    \end{pmatrix},\quad Q_{2}=\begin{pmatrix}
      -1 & 0 & 0 \\
      0 & -1 & 0 \\
      0 & 0 & 1 \\
    \end{pmatrix},\\ Q_{3}&=\begin{pmatrix}
      0 & 1 & 0 \\
      -1 & 0 & 0 \\
      0 & 0 & 1 \\
    \end{pmatrix},\ Q_{4}=\begin{pmatrix}
      1 & 0 & 0 \\
      0 & 1 & 0 \\
      0 & 0 & 1 \\
    \end{pmatrix}.
    \end{split}
\end{equation*}
 It is easy to see that the above matrices form a group with respect to the matrix multiplication. As well as that they are related by $Q_{1}^{2}=Q_{2}$, $Q_{1}^{3}=Q_{3}$ and $Q_{1}^{4}=Q_{4}$. The matrices $Q_{1}$ and $Q_{3}$ satisfy inequalities $Q^{2}\neq id$ and $Q\neq id$.
 Further, we choose to work with structures represented by $Q_{1}$ and $Q_{2}$.

We consider a Riemannian manifold $(M, g, Q)$ and the associated manifold $(M, g, P)$, where $P=Q^{2}$. These manifolds are determined in the following way.

Let $M$ be a $3$-dimensional differentiable manifold equipped with a Riemannian metric $g$.
Let $Q$ be a tensor field on $M$ of type $(1,1)$, whose component matrix is:
\begin{equation}\label{Q}
    (Q_{i}^{j})=\begin{pmatrix}
      0 & -1 & 0 \\
      1 & 0 & 0 \\
      0 & 0 & 1 \\
    \end{pmatrix}.
\end{equation}
Obviously
\begin{equation*}%\label{Q4}
    Q^{4}= id,\qquad Q^{2}\neq \pm id.
\end{equation*}
Let the structure $Q$ be compatible with $g$ such that
\begin{equation}\label{g-suglasuvane}
     g(Qx, Qy)=g(x,y).
\end{equation}
 Here and anywhere in this work, $x, y, z, t$ will stand for arbitrary elements of the algebra on the smooth vector fields on $M$ or vectors in the tangent space $T_{p}M$ ($p\in M$). The Einstein summation convention is used, the range of the summation indices being always $\{1, 2, 3\}$.

The equalities \eqref{Q} and \eqref{g-suglasuvane} imply that the component matrix of $g$ has the form
\begin{equation}\label{g}
    (g_{ij})=\begin{pmatrix}
      A & 0 & 0 \\
      0 & A & 0 \\
      0 & 0 & B \\
    \end{pmatrix},
\end{equation}
 where $A=A(x^{1}, x^{2}, x^{3})$ and $B=B(x^{1}, x^{2}, x^{3})$ are smooth functions of a point $p(x^{1}, x^{2}, x^{3})$ on $M$. We suppose $A>0$ and $B>0$ in order that the metric $g$ is positive definite.

Bearing in mind \eqref{Q} and \eqref{g-suglasuvane}, we get that the structure $P=Q^{2}$ has a component matrix
\begin{equation}\label{P}
    (P_{i}^{j})=\begin{pmatrix}
      -1 & 0 & 0 \\
       0 & -1 & 0 \\
      0 & 0 & 1 \\
    \end{pmatrix},
\end{equation}
and $g(Px,Py)=g(x,y)$. Therefore, the manifold $(M, g, P)$, where $P = Q^{2}$, is a Riemannian manifold with an almost product structure $P$.

The associated metric $\tilde{g}$ on $(M, g, Q)$ is introduced by
\begin{equation}\label{metricf}
\tilde{g}(x,y)=g(x,Py).\end{equation}
Since the matrices of $g$ and $P$ are determined by \eqref{g} and \eqref{P}, then $\tilde{g}$ has a component matrix
\begin{equation}\label{metric-f}
    (\tilde{g}_{ij})=\begin{pmatrix}
      -A & 0 & 0 \\
      0 & -A & 0 \\
      0 & 0 & B \\
    \end{pmatrix}.
\end{equation}
Due to $A>0$ and $B>0$ the metric $\tilde{g}$ is necessarily indefinite.

With $\nabla$ we denote the Levi-Civita (Riemannian) connection of $g$. The tensor $F$ of type $(0,3)$ and the 1-form $\theta$ are defined by
\begin{equation}\label{F}
  F(x,y,z)=g((\nabla_{x}P)y,z),\quad \theta(z)=g^{ij}F(e_{i},e_{j},z),
\end{equation}
where $\{e_{i}\}$ is a basis of $T_{p}M$ and $g^{ij}$ are the components of the inverse matrix of $(g_{ij})$.

The tensor $F$ has the following properties:
\begin{equation}\label{F-prop}
	F(x,y,z)=F(x,z,y), \quad F(x,Py,Pz)=-F(x,y,z).
\end{equation}
\begin{remark}
The tensor $F$ is fundamental on $(M, g, P)$ and also on $(M, g, Q)$, since it defines basic classes for both manifolds.
\end{remark}
 For the Riemannian almost product manifolds Naveira's classification is valid (\cite{Nav}). This classification is made with respect to the fundamental tensor $F$ and the 1-form $\theta$.
 The almost product manifolds with an integrable structure $P$ are called Riemannian product manifolds. Their subclass of locally conformal Riemannian product manifolds is the largest class, which is closed with respect to the usual conformal transformations of the metric. Moreover, the manifolds of this class have a fundamental tensor $F$ expressed as follows:
   \begin{equation}\label{c1-Q}
\begin{split} F(x,y,z)=&\frac{1}{4pq}\big\{(m\theta(y)+(p-q)\theta(Py))g(x,z)\\&+(m\theta(z)+(p-q)\theta(Pz))g(x,y)\\&-(m\theta(Py)+(p-q)\theta(y))g(x,Pz)\\&-(m\theta(Pz)+(p-q)\theta(z))g(x,Py)\big\}.
\end{split}
\end{equation}
Here $p$ and $q$ are the numbers of the multiplicity of the eigenvalues $1$ and $-1$ of $P$, respectively; $m=p+q$ is the dimension of the manifold.

 The class of Riemannian almost product manifolds with a parallel structure $P$, i.e. locally product manifolds, is determined by
 \begin{equation}\label{c0-Q}
  F(x, y, z)=0.
\end{equation}

\section{Locally conformal Riemannian product manifolds}\label{Sec:3}
In this section, we try to find the locus of $(M, g, Q)$ and also of $(M, g, P)$, bearing in mind the Naveira's classification.
For this purpose we first calculate the components of $F$ and $\theta$.
\begin{lemma}\label{tF}
 The nonzero components $F_{ijk}=F(e_{i}, e_{j}, e_{k})$ of the fundamental tensor $F$ are given by
\begin{equation}\label{nablaf}
 F_{113}=F_{223}=A_{3},\quad  F_{313}=-B_{1},\quad F_{323}=-B_{2},
\end{equation}
where $A_{i}=\dfrac{\partial A}{\partial x^{i}}$, $B_{i}=\dfrac{\partial B}{\partial x^{i}}$.
\end{lemma}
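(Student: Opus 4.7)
The plan is to evaluate $F_{ijk}=g\big((\nabla_{e_i}P)e_j,e_k\big)$ directly in the given chart. Since the entries $P^l_j$ in (5) are constants, $\partial_i P^l_j=0$, so expanding $(\nabla_{e_i}P)e_j$ in terms of Christoffel symbols gives
\[
F_{ijk}=g_{lk}\,\Gamma^l_{im}P^m_j-g_{mk}\,P^m_l\,\Gamma^l_{ij}.
\]
Equivalently, since $\nabla g=0$ and $\tilde g_{jk}=g_{jl}P^l_k$ is symmetric (as $P$ is $g$-symmetric, being the square of the $g$-isometry $Q$), one has the convenient compact form $F_{ijk}=\nabla_i\tilde g_{jk}=\partial_i\tilde g_{jk}-\Gamma^l_{ij}\tilde g_{lk}-\Gamma^l_{ik}\tilde g_{jl}$, and both $g$ and $\tilde g$ are diagonal by (7) and (9).

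Before calculating anything, I would prune the case list using the symmetries (12). Writing $Pe_\alpha=\epsilon_\alpha e_\alpha$ with $\epsilon_1=\epsilon_2=-1$ and $\epsilon_3=1$, the identity $F(x,Py,Pz)=-F(x,y,z)$ reads $\epsilon_j\epsilon_k F_{ijk}=-F_{ijk}$, which forces $F_{ijk}=0$ whenever $j$ and $k$ lie in the same eigenspace of $P$. Together with $F_{ijk}=F_{ikj}$, this reduces the $27$ components to only six candidates: $F_{i13}$ and $F_{i23}$ for $i\in\{1,2,3\}$.

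Finally, one computes the Christoffel symbols of $g=\mathrm{diag}(A,A,B)$ via $\Gamma^k_{ij}=\tfrac{1}{2}g^{kk}(\partial_i g_{jk}+\partial_j g_{ik}-\partial_k g_{ij})$ (no sum on $k$) and substitutes. A representative calculation is
\[
F_{113}=-\Gamma^3_{11}\tilde g_{33}-\Gamma^1_{13}\tilde g_{11}=-\Bigl(-\tfrac{A_3}{2B}\Bigr)B-\tfrac{A_3}{2A}(-A)=A_3,
\]
and the same pattern yields $F_{223}=A_3$, $F_{313}=-B_1$, $F_{323}=-B_2$. For the two mixed-block candidates $F_{123}$ and $F_{213}$, every Christoffel symbol that could contribute ($\Gamma^3_{12}$, $\Gamma^1_{23}$, $\Gamma^2_{13}$) vanishes because the relevant off-diagonal partial derivatives of $g$ are zero, so these drop out as well, leaving exactly (14). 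The main obstacle is purely bookkeeping---tracking the signs introduced by the negative diagonal entries of $\tilde g$ against those in the Christoffel symbols; there is no conceptual difficulty once the two symmetry observations above have eliminated most of the work.
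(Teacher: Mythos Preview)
Your proof is correct and follows essentially the same route as the paper: both compute the Christoffel symbols of $g=\mathrm{diag}(A,A,B)$ and evaluate $F_{ijk}=\nabla_i\tilde g_{jk}$ directly. Your use of the symmetries \eqref{F-prop} to prune the candidate list to six components before computing is a tidy efficiency that the paper does not make explicit, but the underlying computation is identical.
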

\begin{proof}
If $\Gamma^{k}_{ij}$ are the Riemann-Christoffel symbols of $\nabla$, then
 \begin{equation}\label{2.3}
2\Gamma_{ij}^{s}=g^{as}(\partial_{i}g_{aj}+\partial_{j}g_{ai}-\partial_{a}g_{ij}).
\end{equation}
The inverse matrix of $(g_{ij})$ has the form:
\begin{equation}\label{g-obr}
    (g^{ij})=\begin{pmatrix}
            \frac{1}{A}& 0 & 0 \\
            0& \frac{1}{A}& 0 \\
            0 &0 & \frac{1}{B}\\
            \end{pmatrix}.
  \end{equation}
Using \eqref{g}, \eqref{2.3} and \eqref{g-obr}, we calculate the coefficients $\Gamma^{k}_{ij}$. They are given below:
\begin{equation}\label{gama}
\begin{split}
\Gamma_{11}^{1}&=\dfrac{A_{1}}{2A},\ \Gamma_{11}^{2}=-\dfrac{A_{2}}{2A},\ \Gamma_{11}^{3}=-\dfrac{A_{3}}{2B},\ \Gamma_{12}^{1}=\dfrac{A_{2}}{2A},\ \Gamma_{12}^{2}=\dfrac{A_{1}}{2A},\\ \Gamma_{12}^{3}&=0,\
\Gamma_{22}^{1}=-\dfrac{A_{1}}{2A},\ \Gamma_{22}^{2}=\dfrac{A_{2}}{2A},\ \Gamma_{22}^{3}=-\dfrac{A_{3}}{2B},\
\Gamma_{13}^{1}=\dfrac{A_{3}}{2A},\\ \Gamma_{13}^{2}&=0,\ \Gamma_{13}^{3}=\dfrac{B_{1}}{2B},\
\Gamma_{33}^{1}=-\dfrac{B_{1}}{2A},\ \Gamma_{33}^{2}=-\dfrac{B_{2}}{2A},\ \Gamma_{33}^{3}=\dfrac{B_{3}}{2B},\\
\Gamma_{23}^{1}&=0,\ \Gamma_{23}^{2}=\dfrac{A_{3}}{2A},\ \Gamma_{23}^{3}=\dfrac{B_{2}}{2B}.
\end{split}
\end{equation}
Due to \eqref{metricf} and \eqref{F} the components of $F$ are $F_{ijk}=\nabla_{i}\tilde{g}_{jk}$.
We apply to $\tilde{g}$ the following well-known formula for the covariant derivative of tensors:
\begin{equation}\label{defF}
\nabla_{i}\tilde{g}_{jk}=\partial_{i}\tilde{g}_{jk}-\Gamma_{ij}^{a}\tilde{g}_{ak}-\Gamma_{ik}^{a}\tilde{g}_{aj}.
\end{equation}
Then, with the help of \eqref{metric-f}, \eqref{F-prop} and \eqref{gama} we calculate the components of $F$, given in \eqref{nablaf}.
\end{proof}
\begin{corollary}\label{t-theta}
 The components $\theta_{k}=g^{ij}F(e_{i}, e_{j}, e_{k})$ of the 1-form $\theta$ are expressed by the equalities:
 \begin{equation}\label{theta}
\begin{split}
\theta_{1}=-\frac{B_{1}}{B},\qquad
\theta_{2}=-\frac{B_{2}}{B},\qquad
\theta_{3}=\frac{2A_{3}}{A}.
\end{split}
\end{equation}
\end{corollary}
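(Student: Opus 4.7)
The proof is a direct contraction of the tensor $F$ against the inverse metric, so my plan is to reduce the sum $\theta_k = g^{ij}F_{ijk}$ to a three-term diagonal sum and then read off the nonzero pieces from Lemma~\ref{tF}.

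First, I would observe that $(g^{ij})$ given in \eqref{g-obr} is diagonal, so the double sum collapses to
\begin{equation*}
\theta_k = g^{ij}F_{ijk} = \frac{1}{A}F_{11k} + \frac{1}{A}F_{22k} + \frac{1}{B}F_{33k}.
\end{equation*}
Thus for each $k\in\{1,2,3\}$, I only need the three components $F_{11k}$, $F_{22k}$, $F_{33k}$.

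Next, I would use Lemma~\ref{tF} together with the symmetry $F(x,y,z)=F(x,z,y)$ from \eqref{F-prop}, which in index form says $F_{ijk}=F_{ikj}$. This lets me rewrite $F_{33k}$ for $k=1,2$ as $F_{33k}=F_{3k3}$, giving $F_{331}=F_{313}=-B_1$ and $F_{332}=F_{323}=-B_2$. All other components $F_{iik}$ not appearing in \eqref{nablaf} (even after applying the $y\leftrightarrow z$ symmetry) vanish.

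Finally, I would substitute back for each index:
\begin{equation*}
\theta_1 = \frac{1}{B}F_{331} = -\frac{B_1}{B},\qquad \theta_2 = \frac{1}{B}F_{332} = -\frac{B_2}{B},\qquad \theta_3 = \frac{1}{A}F_{113}+\frac{1}{A}F_{223} = \frac{2A_3}{A},
\end{equation*}
which is exactly \eqref{theta}. There is no real obstacle here; the only thing to be careful about is remembering to apply the symmetry $F_{ijk}=F_{ikj}$ when recovering components like $F_{331}$ from the list in \eqref{nablaf}, which only records one representative of each symmetric pair.
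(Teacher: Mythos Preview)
Your proof is correct and follows exactly the approach indicated in the paper, which simply states that the result follows from \eqref{F}, \eqref{nablaf} and \eqref{g-obr} by direct computations. Your write-up is in fact more explicit than the paper's, spelling out the diagonal contraction and the use of the symmetry $F_{ijk}=F_{ikj}$ to recover $F_{331}$ and $F_{332}$ from the list in \eqref{nablaf}.
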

\begin{proof}
The proof follows from \eqref{F}, \eqref{nablaf} and \eqref{g-obr} by direct computations.
 \end{proof}
Now, bearing in mind the definition of locally conformal Riemannian product manifolds given by \eqref{c1-Q}, we formulate the following
\begin{theorem}\label{tw1}
The manifold $(M, g, P)$ is a locally conformal Riemannian product manifold and the fundamental tensor $F$, determined by \eqref{F}, satisfies the identity
\begin{equation}\label{c1}
\begin{split}
 F(x,y,z)=&\frac{1}{8}\big\{\big(3\theta(y)-\theta(Py)\big)g(x,z)+\big(3\theta(z)-\theta(Pz)\big)g(x,y)\\&-\big(3\theta(Py)-\theta(y)\big)g(x,Pz)-\big(3\theta(Pz)-\theta(z)\big)g(x,Py)\big\}.
\end{split}
\end{equation}
\end{theorem}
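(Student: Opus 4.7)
The plan is to show that the explicit data computed in Lemma~\ref{tF} and Corollary~\ref{t-theta} fits the Naveira template \eqref{c1-Q} with the multiplicities dictated by \eqref{P}. From the matrix \eqref{P} the eigenvalue $+1$ of $P$ has multiplicity $p=1$ and the eigenvalue $-1$ has multiplicity $q=2$, so $m=p+q=3$, $4pq=8$, and $p-q=-1$. Substituting these numerical values into \eqref{c1-Q} collapses each $m\theta(\cdot)+(p-q)\theta(P\cdot)$ block to $3\theta(\cdot)-\theta(P\cdot)$, reproducing the right-hand side of \eqref{c1}. It therefore suffices to verify that the $F$ of Lemma~\ref{tF} satisfies this specific identity; once this is done, $(M,g,P)$ belongs to the locally conformal Riemannian product class by definition.

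For the verification I would work in the coordinate basis $\{e_{1},e_{2},e_{3}\}$. Both sides of \eqref{c1} are trilinear and satisfy the symmetries \eqref{F-prop}, so only finitely many components need to be tested. The action of $P$ on the basis, read off from \eqref{P}, is $Pe_{1}=-e_{1}$, $Pe_{2}=-e_{2}$, $Pe_{3}=e_{3}$, so the 1-form values simplify to $\theta(Pe_{\alpha})=-\theta(e_{\alpha})$ for $\alpha=1,2$ and $\theta(Pe_{3})=\theta(e_{3})$. Combined with the diagonal form of $g$ in \eqref{g}, the right-hand side of \eqref{c1} evaluated on any triple $(e_{i},e_{j},e_{k})$ becomes a short sum of monomials in $A$, $B$ and their first derivatives, and most of the triples give zero on both sides simultaneously.

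The heart of the calculation is confirming the nonzero cases. Plugging $(e_{1},e_{1},e_{3})$ and $(e_{2},e_{2},e_{3})$ into \eqref{c1} and using $\theta(e_{3})=2A_{3}/A$ from Corollary~\ref{t-theta} should recover $F_{113}=F_{223}=A_{3}$, while $(e_{3},e_{1},e_{3})$ and $(e_{3},e_{2},e_{3})$ together with $\theta(e_{1})=-B_{1}/B$ and $\theta(e_{2})=-B_{2}/B$ should recover $F_{313}=-B_{1}$ and $F_{323}=-B_{2}$. After these four equalities, the symmetries \eqref{F-prop} finish every other component. The main obstacle is bookkeeping rather than conceptual: one has to keep straight the signs in the combinations $3\theta(y)-\theta(Py)$ and the minus signs produced by $g(x,Py)$ when $y=e_{1}$ or $y=e_{2}$, together with the factor $\tfrac{1}{8}=\tfrac{1}{4pq}$ in front. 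With \eqref{c1} established the theorem follows at once.
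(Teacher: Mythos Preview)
Your proposal is correct and follows essentially the same route as the paper: specialise \eqref{c1-Q} with $p=1$, $q=2$, $m=3$ to obtain \eqref{c1}, then verify the identity componentwise using the data of Lemma~\ref{tF} and Corollary~\ref{t-theta} together with the diagonal forms of $g$, $\tilde g$ and the action of $P$ on the basis. The paper packages this verification via the shorthand $\widetilde{\theta}_{i}=P_{i}^{a}\theta_{a}$ and the local form \eqref{usl-w1}, while you spell out the individual nonzero triples, but the argument is the same.
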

\begin{proof}
The equality \eqref{c1} is obtained from \eqref{c1-Q} by substituting $p=1$, $q=2$ and $m=p+q=3$.

We denote by $\widetilde{\theta}_{i}=P_{i}^{a}\theta_{a}$. Then from \eqref{P} and \eqref{theta}, we get
\begin{equation}\label{theta*}
  \widetilde{\theta}_{1}=-\theta_{1},\ \widetilde{\theta}_{2}=-\theta_{2},\ \widetilde{\theta}_{3}=\theta_{3}.
\end{equation}
Using  \eqref{g}, \eqref{metric-f}, \eqref{nablaf}, \eqref{theta} and \eqref{theta*} we obtain
 \begin{equation}\label{usl-w1}
 \begin{split}
  F_{ijk}&=\frac{1}{8}\big(g_{ik}(3\theta_{j}-\tilde{\theta}_{j})+g_{ij}(3\theta_{k}-\tilde{\theta}_{k})\\&-\tilde{g}_{ik}(3\widetilde{\theta}_{j}-\theta_{j})-\tilde{g}_{ij}(3\widetilde{\theta}_{k}-\theta_{k})\big),
  \end{split}
\end{equation}
which is a local form of \eqref{c1}.
\end{proof}

If $F=0$, then the structure $P$ is parallel with respect to $\nabla$. The following necessary and sufficient conditions for $Q$ and also for $P=Q^{2}$ to be parallel structures with respect to $\nabla$ are established.
\begin{theorem}\label{alm-keler}
The manifold $(M, g, P)$ satisfies $\nabla P=0$ if and only if
\begin{align}\label{paralel-P2}
   A=A(x^{1}, x^{2}),\quad B=B(x^{3}).
\end{align}
\end{theorem}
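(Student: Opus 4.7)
The plan is to reduce the condition $\nabla P = 0$ directly to vanishing of $F$, and then invoke Lemma \ref{tF} to read off a system of partial differential equations on $A$ and $B$ whose solutions are exactly those described by \eqref{paralel-P2}.

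First I would note the equivalence $\nabla P = 0 \iff F = 0$. This is immediate from the definition $F(x,y,z) = g((\nabla_x P)y, z)$ in \eqref{F} together with the non-degeneracy of the Riemannian metric $g$: if all components of $F$ vanish, then $g((\nabla_x P)y, z) = 0$ for every $z$, so $(\nabla_x P)y = 0$ for all $x, y$, and conversely.

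Next I would apply Lemma \ref{tF}, which lists the only possibly nonzero components of $F$ as
\begin{equation*}
F_{113} = F_{223} = A_3, \qquad F_{313} = -B_1, \qquad F_{323} = -B_2,
\end{equation*}
together with those obtained from the symmetry $F(x,y,z) = F(x,z,y)$ in \eqref{F-prop}. Hence $F \equiv 0$ is equivalent to the simultaneous vanishing
\begin{equation*}
A_3 = 0, \qquad B_1 = 0, \qquad B_2 = 0.
\end{equation*}
The first equation says $A$ does not depend on $x^3$, i.e.\ $A = A(x^1, x^2)$; the second and third say $B$ depends only on $x^3$, i.e.\ $B = B(x^3)$. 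This yields exactly \eqref{paralel-P2}.

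The converse direction is just the observation that if $A = A(x^1, x^2)$ and $B = B(x^3)$, then $A_3 = B_1 = B_2 = 0$, so every component of $F$ in Lemma \ref{tF} vanishes, whence $F \equiv 0$ and therefore $\nabla P = 0$. There is no real obstacle here: the hard computational work has already been done in Lemma \ref{tF} (the explicit Christoffel symbols and the covariant derivative of $\tilde g$), and the present theorem is essentially a direct reading of that lemma. The only point requiring a moment's attention is the logical equivalence in the very first step, ensuring one does not need to check any additional components beyond those flagged by Lemma \ref{tF}.
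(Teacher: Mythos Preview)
Your proposal is correct and follows essentially the same approach as the paper: invoke the equivalence $\nabla P=0 \Leftrightarrow F=0$ (which the paper records as \eqref{c0-Q}) and then read off the conditions $A_{3}=B_{1}=B_{2}=0$ directly from the component list \eqref{nablaf} of Lemma~\ref{tF}. The only difference is that you spell out the non-degeneracy argument for the equivalence and the converse direction explicitly, whereas the paper leaves these implicit.
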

\begin{proof}
Directly from \eqref{c0-Q} and \eqref{nablaf} it follows \begin{align}\label{paralel-P}
   A_{3}=0,\quad B_{1}=0,\quad B_{2}=0.
\end{align}
Consequently we get \eqref{paralel-P2}.
\end{proof}
\begin{theorem} The structure $Q$ on $(M, g, Q)$ is parallel if and only if the structure $P$ on $(M, g, P)$ is parallel.
\end{theorem}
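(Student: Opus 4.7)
The forward direction is immediate from the Leibniz rule applied to tensor composition: since $P=Q^{2}$, we have $\nabla_{x}P = (\nabla_{x}Q)Q + Q(\nabla_{x}Q)$, so $\nabla Q =0$ gives $\nabla P =0$ on the nose. I would state and discard this direction first.

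For the converse, my plan is to use Theorem~\ref{alm-keler} (already proved in the excerpt) to reduce the hypothesis $\nabla P =0$ to the metric conditions $A_{3}=0$, $B_{1}=0$, $B_{2}=0$, i.e.\ $A=A(x^{1},x^{2})$ and $B=B(x^{3})$, and then verify $\nabla Q =0$ by a direct coordinate computation. The components of $\nabla Q$ are given by
\begin{equation*}
\nabla_{i}Q_{j}^{k} \;=\; \partial_{i}Q_{j}^{k} + \Gamma_{ia}^{k}Q_{j}^{a} - \Gamma_{ij}^{a}Q_{a}^{k},
\end{equation*}
and the first term vanishes because the matrix \eqref{Q} has constant entries. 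Under the hypotheses $A_{3}=B_{1}=B_{2}=0$, the table \eqref{gama} collapses dramatically: every Christoffel symbol carrying an upper index $3$ with at least one lower index in $\{1,2\}$ vanishes, and likewise $\Gamma_{33}^{1}=\Gamma_{33}^{2}=\Gamma_{i3}^{1}=\Gamma_{i3}^{2}=0$.

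From here, the bookkeeping is routine. Because only $Q_{1}^{2}=1$, $Q_{2}^{1}=-1$, $Q_{3}^{3}=1$ are nonzero, the cases $(i,3,k)$ and $(i,j,3)$ for $j\in\{1,2\}$ reduce to differences of Christoffel symbols that are already zero under the hypotheses. The remaining nontrivial cases produce the pairings
\begin{equation*}
\Gamma_{i2}^{1}+\Gamma_{i1}^{2},\qquad \Gamma_{i2}^{2}-\Gamma_{i1}^{1},
\end{equation*}
for $i\in\{1,2,3\}$; inspecting \eqref{gama} shows each of these is identically zero (the $i=3$ ones because the relevant symbols vanish, the $i=1,2$ ones by direct cancellation between terms in $A_{1}/(2A)$ and $A_{2}/(2A)$).

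The main obstacle I expect is purely conceptual rather than computational: a priori $\nabla P=0$ need not imply $\nabla Q=0$, since $Q$ is only a fourth root of $\id$ and taking roots is not compatible with parallelism in general. What rescues the statement here is the very rigid form \eqref{Q} of $Q$ in the adapted chart coming from the eigendecomposition of $P$, which makes the forward computation above close up cleanly. I would emphasize this point in a short remark so the reader sees why the special-coordinate argument is not circular.
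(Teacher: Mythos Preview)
Your proposal is correct and follows essentially the same route as the paper: both arguments amount to computing the components $\nabla_{i}Q^{k}_{j}$ via $\partial_{i}Q^{k}_{j}+\Gamma^{k}_{ia}Q^{a}_{j}-\Gamma^{a}_{ij}Q^{k}_{a}$ using \eqref{Q} and \eqref{gama}, and observing that every nonzero component is a multiple of $A_{3}$, $B_{1}$, or $B_{2}$, hence $\nabla Q=0$ is equivalent to \eqref{paralel-P}, which by Theorem~\ref{alm-keler} is equivalent to $\nabla P=0$.

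The only organizational difference is that the paper does a single coordinate computation and reads off both implications from the explicit list of nonzero $\nabla_{i}Q^{k}_{j}$, whereas you dispatch the forward direction conceptually via the Leibniz rule $\nabla_{x}P=(\nabla_{x}Q)Q+Q(\nabla_{x}Q)$ and reserve the coordinate work for the converse. Your forward argument is cleaner and coordinate-free; the paper's version has the slight advantage that the full list of $\nabla_{i}Q^{k}_{j}$ is displayed, making the ``only if'' half of the converse visible at a glance (e.g.\ $\nabla_{3}Q^{1}_{3}$ and $\nabla_{3}Q^{2}_{3}$ together force $B_{1}=B_{2}=0$ independently). Your remark about why the converse is not automatic for general fourth roots is a worthwhile addition that the paper omits.
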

\begin{proof}
Applying \eqref{Q} and \eqref{gama} into \begin{equation*}
\nabla_{i}Q^{k}_{j}=\partial_{i}Q^{k}_{j}+\Gamma_{ia}^{k}Q^{a}_{j}-\Gamma_{ij}^{a}Q^{k}_{a},
\end{equation*} we calculate
\begin{equation*}%\label{nabla-q}
\begin{split}
  \nabla_{1}Q^{3}_{1}&=-\frac{A_{3}}{2B},\quad \nabla_{1}Q^{3}_{2}=-\frac{A_{3}}{2B},\quad \nabla_{2}Q^{3}_{1}=\frac{A_{3}}{2B},\\ \nabla_{2}Q^{3}_{2}&=\frac{A_{3}}{2B},\quad
  \nabla_{3}Q^{1}_{3}=\frac{B_{2}-B_{1}}{2A},\quad \nabla_{3}Q^{2}_{3}=-\frac{B_{1}+B_{2}}{2A},\\ \nabla_{1}Q^{1}_{3}&=\frac{A_{3}}{2A},\quad \nabla_{1}Q^{2}_{3}=\frac{A_{3}}{2A},\quad
\nabla_{3}Q^{2}_{1}=\frac{A_{3}}{2A},\\ \nabla_{3}Q^{3}_{1}&=-\frac{B_{1}+B_{2}}{2B},\quad \nabla_{2}Q^{1}_{3}=-\frac{A_{3}}{2A},\quad \nabla_{2}Q^{2}_{3}=\frac{A_{3}}{2A}.
  \end{split}
\end{equation*}
The remaining components of $\nabla Q$ are equal to zero.
Therefore the condition $\nabla Q=0$ is valid if and only if the functions $A$ and $B$ satisfy \eqref{paralel-P}.
\end{proof}

\section{Almost Einstein manifolds}\label{Sec:4}
In this section, we investigate some curvature quantities of $(M, g, Q)$, corresponding to the metric $g$ and to the associated metric $\tilde{g}$, defined by \eqref{metricf}. We also determine classes of Einstein and almost Einstein manifolds.

The curvature tensor $R$ of $\nabla$ is defined by \begin{equation}\label{R-def}R(x, y)z=\nabla_{x}\nabla_{y}z-\nabla_{y}\nabla_{x}z-\nabla_{[x,y]}z.\end{equation}
Also we consider the tensor of type $(0, 4)$, associated with $R$, defined by
\begin{equation}\label{R-4}
    R(x, y, z, t)=g(R(x, y)z,t).
\end{equation}

The Ricci tensor $\rho$ and the scalar curvature $\tau$, with respect to $g$, are given by the well-known formulas:
\begin{equation}\label{def-rho}
    \rho(y,z)=g^{ij}R(e_{i}, y, z, e_{j}),\quad
    \tau=g^{ij}\rho(e_{i}, e_{j}).
\end{equation}

A Riemannian manifold is said to be Einstein if its Ricci tensor $\rho$ is a multiple of the metric tensor $g$ and a smooth function on $M$, i.e.
\begin{equation}\label{E}\rho(x, y) = \alpha g(x, y).\end{equation}

In \cite{Yano}, for locally decomposable Riemannian manifolds is defined a class of almost Einstein manifolds.
For the considered in our paper manifolds, we suggest the following
\begin{definition}\label{defAE}
A Riemannian manifold $(M, g, Q)$ is called
almost Einstein if its Ricci tensor $\rho$ and the metrics $g$ and $\tilde{g}$ satisfy
\begin{equation}\label{AE}
\rho(x, y) = \alpha g(x, y) + \beta \tilde{g}(x, y),\end{equation} where $\alpha$ and $\beta$ are smooth functions on $M$.
\end{definition}

Let $\tilde{\Gamma}$ be the Christoffel symbols of $\tilde{g}$ and $\tilde{\nabla}$ be the Levi-Civita connection of $\tilde{g}$. Let $\tilde{R}$
be the curvature tensor of $\tilde{\nabla}$. The Ricci tensor $\tilde{\rho}$ and the scalar curvature $\tilde{\tau}$ with respect to $\tilde{g}$ are given by
\begin{equation}\label{def-rho2}
    \tilde{\rho}(y,z)=\tilde{g}^{ij}\tilde{R}(e_{i}, y, z, e_{j}),\quad
    \tilde{\tau}=\tilde{g}^{ij}\tilde{\rho}(e_{i}, e_{j}).
\end{equation}
Here $\tilde{g}^{ij}$ are the components of the inverse matrix of $(\tilde{g}_{ij})$.
Let us denote
\begin{equation}\label{def-rho*}
    \tau^{*}=\tilde{g}^{ij}\rho(e_{i}, e_{j}),\quad \tilde{\tau}^{*}=g^{ij}\tilde{\rho}(e_{i}, e_{j}).
\end{equation}
Now we establish the following
\begin{theorem}\label{connR-R}
 Let $\tilde{g}$ be the associated metric on $(M, g, Q)$ defined by \eqref{metricf}. For the Ricci tensors  $\rho$ and $\tilde{\rho}$ and for the scalar quantities $\tau$, $\tau^{*}$, $\tilde{\tau}$ and $\tilde{\tau}^{*}$ the following relation is valid:
 \begin{equation}\label{con-AE}
 \begin{split}
     \tilde{\rho}(x,y) = & \rho(x,y)+\frac{1}{8}(3\tilde{\tau}^{*}+\tilde{\tau}-3\tau-\tau^{*})g(x,y)\\+&\frac{1}{8}(3\tilde{\tau}+\tilde{\tau}^{*}-3\tau^{*}-\tau)\tilde{g}(x,y).
\end{split}\end{equation}
\end{theorem}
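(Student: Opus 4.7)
The plan is to verify \eqref{con-AE} by a direct local-coordinate computation, exploiting the diagonal forms \eqref{g} and \eqref{metric-f} of $g$ and $\tilde g$. Since both sides of \eqref{con-AE} are symmetric $(0,2)$-tensors and since $g(x,y)$ and $\tilde g(x,y)$ together span a two-parameter family of diagonal tensors at each point, it suffices to verify the identity at the four slots $(1,1),(2,2),(3,3)$ and $(1,2)$.

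First I would apply the Christoffel formula \eqref{2.3} to $\tilde g$, whose inverse reads $\tilde g^{ij}=\mathrm{diag}(-1/A,-1/A,1/B)$, to obtain the coefficients $\tilde\Gamma^{k}_{ij}$ of $\tilde\nabla$. The derivation runs parallel to that of \eqref{gama} in Lemma \ref{tF}, with predictable sign reversals in the coefficients that mix $A$- and $B$-derivatives. From $\tilde\Gamma$ I would then compute $\tilde R^{l}{}_{ijk}$ via the classical formula $\tilde R^{l}{}_{ijk}=\partial_{i}\tilde\Gamma^{l}_{jk}-\partial_{j}\tilde\Gamma^{l}_{ik}+\tilde\Gamma^{l}_{ia}\tilde\Gamma^{a}_{jk}-\tilde\Gamma^{l}_{ja}\tilde\Gamma^{a}_{ik}$ and assemble the components $\tilde\rho_{jk}$ from \eqref{def-rho2}; in parallel, $\rho_{jk}$ follows from \eqref{def-rho} and the already-established symbols \eqref{gama}. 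The four scalars $\tau,\tilde\tau,\tau^{*},\tilde\tau^{*}$ then reduce to explicit combinations of $\rho_{11}+\rho_{22}$, $\rho_{33}$, $\tilde\rho_{11}+\tilde\rho_{22}$, $\tilde\rho_{33}$ divided by $A$ or $B$.

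The particular coefficients may in fact be predicted before all the curvature bookkeeping is completed. Postulating the ansatz $\tilde\rho=\rho+\alpha g+\beta\tilde g$ and taking the $g$- and $\tilde g$-traces, using $g^{ij}g_{ij}=\tilde g^{ij}\tilde g_{ij}=3$ together with $g^{ij}\tilde g_{ij}=\tilde g^{ij}g_{ij}=\mathrm{tr}\,P=-1$, produces the linear system $3\alpha-\beta=\tilde\tau^{*}-\tau$ and $-\alpha+3\beta=\tilde\tau-\tau^{*}$, whose determinant is $8$ and whose unique solution is $\alpha=\frac{1}{8}(3\tilde\tau^{*}+\tilde\tau-3\tau-\tau^{*})$ and $\beta=\frac{1}{8}(3\tilde\tau+\tilde\tau^{*}-3\tau^{*}-\tau)$, exactly as in \eqref{con-AE}. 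It remains to confirm, from the explicit curvature formulas of step two, that $\tilde\rho-\rho$ really lies in the span of $g$ and $\tilde g$: concretely, $\tilde\rho_{11}-\rho_{11}=\tilde\rho_{22}-\rho_{22}$ and $\tilde\rho_{ij}=\rho_{ij}$ for $i\neq j$. The main obstacle is the sheer volume of bookkeeping, with careful sign tracking required throughout, but no deeper conceptual difficulty arises; the identity records what falls out once all cancellations have been performed.
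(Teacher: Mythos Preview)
Your plan is sound and would succeed, but it proceeds along a different route from the paper. You propose a brute-force computation: evaluate $\tilde\Gamma^{k}_{ij}$, $\tilde R$ and $\tilde\rho$ directly from the diagonal form \eqref{metric-f}, then verify componentwise that $\tilde\rho-\rho$ lies in the span of $g$ and $\tilde g$, and finally recover the coefficients by the trace argument (which you carry out correctly). The paper instead works structurally through the affine deformation tensor $T^{k}_{ij}=\tilde\Gamma^{k}_{ij}-\Gamma^{k}_{ij}$. Using the locally-conformal-product identity \eqref{usl-w1} established in Theorem~\ref{tw1}, the paper shows that $T^{k}_{ij}$ has the closed form \eqref{torsion}, and then the standard relation between $\tilde R$ and $R$ in terms of $T$ yields, after contraction, $\tilde\rho_{ij}=\rho_{ij}+\tfrac{1}{8}g_{ij}(-3\nabla_{s}\tilde\theta^{s}+\nabla_{s}\theta^{s})+\tfrac{1}{8}\tilde g_{ij}(3\nabla_{s}\theta^{s}-\nabla_{s}\tilde\theta^{s})$. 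This already exhibits $\tilde\rho-\rho$ as a combination of $g$ and $\tilde g$ without any component-by-component check; the final step---solving for the divergence terms in terms of $\tau,\tau^{*},\tilde\tau,\tilde\tau^{*}$ via the two traces---is essentially the same linear system you write down. Your approach trades the use of Theorem~\ref{tw1} for explicit curvature bookkeeping; the paper's approach explains \emph{why} $\tilde\rho-\rho$ is forced to lie in $\mathrm{span}\{g,\tilde g\}$ by the class to which the manifold belongs, and would generalise to any locally conformal Riemannian product manifold of the same type without redoing the coordinate calculus.
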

\begin{proof}
From \eqref{defF}, applying the Christoffel formulas \eqref{2.3} to $\Gamma$ and also to $\tilde{\Gamma}$, we obtain
\begin{equation*}%\label{gamma3}
 \tilde{\Gamma}^{k}_{ij}=\Gamma^{k}_{ij}+\frac{1}{2}\tilde{g}^{ks}(\nabla_{i}\tilde{g}_{js}+\nabla_{j}\tilde{g}_{is}-\nabla_{s}\tilde{g}_{ij}).
 \end{equation*}
 Substituting \eqref{usl-w1} into the above equality, we get
  \begin{equation*}%\label{gamma4}
 \tilde{\Gamma}^{k}_{ij}=\Gamma^{k}_{ij}+\frac{1}{8}\tilde{g}^{ks}\big(g_{ij}(3\theta_{s}-\widetilde{\theta_{s}})-\tilde{g}_{ij}(3\theta_{s}-\widetilde{\theta_{s}})\big).
 \end{equation*}
Now we calculate the components of the tensor $\textrm{T}=\tilde{\Gamma}-\Gamma$ of the affine deformation. They are as follows:
 \begin{equation}\label{torsion}
 T^{k}_{ij}=\frac{1}{8}\big(g_{ij}(3\widetilde{\theta}^{k}-\theta^{k})-\tilde{g}_{ij}(3\widetilde{\theta}^{k}-\theta^{k})\big).
 \end{equation}
Here we used the equalities
\begin{equation}\label{g-tilde-g}
  \tilde{g}^{ka}\theta_{a}=\widetilde{\theta}^{k},\quad \tilde{g}_{ka}\theta^{a}=\widetilde{\theta}_{k},
\end{equation}
which follow from \eqref{metricf} and \eqref{F}.

For the components of the curvature tensors $\tilde{R}$ and $R$, it is well-known the relation
$$ \tilde{R}^{k}_{ijs} = R^{k}_{ijs} + \nabla_{j}T^{k}_{is}-\nabla_{s}T^{k}_{ij}
+T^{a}_{is}T^{k}_{aj}-T^{a}_{ij}T^{k}_{as}.$$
Then, taking into account \eqref{usl-w1}, \eqref{torsion} and \eqref{g-tilde-g}, we
calculate
\begin{equation*}
\begin{split}
     \tilde{R}^{k}_{ijs} = R^{k}_{ijs} &+ \frac{1}{8}g_{is}\big(3\nabla_{j}\widetilde{\theta}^{k}-\nabla_{j}\theta^{k}+\frac{1}{8}(3\widetilde{\theta}_{j}-\theta_{j})(3\widetilde{\theta}^{k}-\theta^{k})\big)\\&-\frac{1}{8}g_{ij}\big(3\nabla_{s}\widetilde{\theta}^{k}-\nabla_{s}\theta^{k}+\frac{1}{8}(3\widetilde{\theta}_{s}-\theta_{s})(3\widetilde{\theta}^{k}-\theta^{k})\big)
 \\&-\frac{1}{8}\tilde{g}_{is}\big(3\nabla_{j}\theta^{k}-\nabla_{j}\widetilde{\theta}^{k}+\frac{1}{8}(3\theta_{j}-\widetilde{\theta}_{j})(3\widetilde{\theta}^{k}-\theta^{k})\big)\\&+\frac{1}{8}\tilde{g}_{ij}\big(3\nabla_{s}\theta^{k}-\nabla_{s}\widetilde{\theta}^{k}+\frac{1}{8}(3\theta_{s}-\widetilde{\theta}_{s})(3\widetilde{\theta}^{k}-\theta^{k})\big).
\end{split}
\end{equation*}
By contracting $k=s$ in the latter equality, and having in mind   \eqref{usl-w1}, \eqref{def-rho}, \eqref{def-rho2}, \eqref{def-rho*} and \eqref{g-tilde-g}, we find
\begin{equation}\label{tilde-S}
\begin{split}
     \tilde{\rho}_{ij} = \rho_{ij}+\frac{1}{8}g_{ij}(-3\nabla_{s}\widetilde{\theta}^{s}+\nabla_{s}\theta^{s})+\frac{1}{8}\tilde{g}_{ij}(3\nabla_{s}\theta^{s}-\nabla_{s}\widetilde{\theta}_{s}).
\end{split}
\end{equation}
We note that the trace of $P$ is equal to $-1$ and due to \eqref{def-rho}, \eqref{def-rho2}, \eqref{def-rho*} and \eqref{tilde-S} we obtain the system of equations
\begin{equation*}
\begin{split}
     &\tilde{\tau} = \tau^{*}+\frac{(-1)}{8}(-3\nabla_{s}\widetilde{\theta}^{s}+\nabla_{s}\theta^{s})+\frac{3}{8}(3\nabla_{s}\theta^{s}-\nabla_{s}\widetilde{\theta}_{s}),\\
     &\tilde{\tau}^{*} = \tau+\frac{3}{8}(-3\nabla_{s}\widetilde{\theta}^{s}+\nabla_{s}\theta^{s})+\frac{(-1)}{8}(3\nabla_{s}\theta^{s}-\nabla_{s}\widetilde{\theta}_{s}).
\end{split}
\end{equation*}
Then from \eqref{tilde-S} we get
\begin{equation*}
     \tilde{\rho}_{ij} = \rho_{ij}+\frac{1}{8}(3\tilde{\tau}^{*}+\tilde{\tau}-3\tau-\tau^{*})g_{ij}+\frac{1}{8}(3\tilde{\tau}+\tilde{\tau}^{*}-3\tau^{*}-\tau)\tilde{g}_{ij},
\end{equation*}
which is a local form of \eqref{con-AE}.
\end{proof}
We immediately state the following propositions.
\begin{proposition}\label{p4.3}
   Let the Levi-Civita connection $\tilde{\nabla}$ of $\tilde{g}$ be a locally flat connection on $(M, g, Q)$. Then
 $(M, g, Q)$ is an almost Einstein manifold, and the Ricci tensor $\rho$ has the form
 \begin{equation}\label{rho-AE}
\rho(x, y) = \frac{3\tau+\tau^{*}}{8}g(x, y)+\frac{3\tau^{*}+\tau}{8}\tilde{g}(x,y).
\end{equation}
\end{proposition}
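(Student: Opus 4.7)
The plan is to read off Proposition \ref{p4.3} as an immediate specialization of Theorem \ref{connR-R}. The hypothesis that $\tilde{\nabla}$ is locally flat means that the curvature tensor $\tilde{R}$ vanishes identically, hence so does its contraction, the Ricci tensor $\tilde{\rho}$. From the definitions in \eqref{def-rho2} and \eqref{def-rho*} the scalars $\tilde{\tau} = \tilde{g}^{ij}\tilde{\rho}(e_i,e_j)$ and $\tilde{\tau}^{*} = g^{ij}\tilde{\rho}(e_i,e_j)$ are built from $\tilde{\rho}$ alone, so both vanish as well.

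Next I would simply substitute $\tilde{\rho} = 0$, $\tilde{\tau} = 0$ and $\tilde{\tau}^{*} = 0$ into the identity \eqref{con-AE} provided by Theorem \ref{connR-R}. Only the terms involving $\tau$ and $\tau^{*}$ survive, and rearranging yields
\begin{equation*}
 \rho(x,y) = \frac{3\tau + \tau^{*}}{8}\, g(x,y) + \frac{3\tau^{*} + \tau}{8}\, \tilde{g}(x,y),
\end{equation*}
which is exactly \eqref{rho-AE}.

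Finally, to conclude that $(M, g, Q)$ is almost Einstein in the sense of Definition \ref{defAE}, I would note that the coefficient functions $\alpha = (3\tau + \tau^{*})/8$ and $\beta = (3\tau^{*} + \tau)/8$ are smooth on $M$, because $\tau$ and $\tau^{*}$ are smooth contractions of $\rho$ with the smooth tensors $g^{-1}$ and $\tilde{g}^{-1}$. There is no real obstacle here: the proposition is a direct corollary, and the only point that requires a brief remark is the vanishing of the two mixed scalar quantities $\tilde{\tau}$ and $\tilde{\tau}^{*}$, which follows automatically from $\tilde{\rho} = 0$ without needing to use flatness again.
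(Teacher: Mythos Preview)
Your proposal is correct and follows essentially the same route as the paper's own proof: from local flatness of $\tilde{\nabla}$ one gets $\tilde{R}=0$, hence $\tilde{\rho}=0$ and $\tilde{\tau}=\tilde{\tau}^{*}=0$, and substituting these into \eqref{con-AE} yields \eqref{rho-AE}, so that $(M,g,Q)$ is almost Einstein by Definition~\ref{defAE}. Your additional remark on the smoothness of the coefficient functions is a harmless elaboration not present in the paper, but otherwise the arguments coincide.
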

\begin{proof}
If $\tilde{\nabla}$ is a locally flat connection, then $\tilde{R}=0$. From \eqref{def-rho2} and \eqref{def-rho*} it follows $\tilde{\rho}=0$ and $\tilde{\tau}=\tilde{\tau}^{*}=0$. Hence \eqref{con-AE} implies \eqref{rho-AE}. Therefore, according to \eqref{AE}, we have that $(M, g, Q)$ is an almost Einstein manifold.
\end{proof}
\begin{corollary}
If the Ricci tensor $\rho$ on $(M, g, Q)$ is determined by \eqref{rho-AE}, then $(M, g, Q)$ is an Einstein manifold if and only if
\begin{equation}\label{tau-E}
  \tau^{*}=-\frac{\tau}{3}.
\end{equation}
\end{corollary}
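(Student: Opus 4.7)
The plan is to exploit the fact that $g$ and $\tilde{g}$ are pointwise linearly independent as symmetric $(0,2)$-tensors on $M$, so that the decomposition in \eqref{rho-AE} is uniquely determined by its coefficients. First I would observe this linear independence: if $\lambda g + \mu\tilde{g} = 0$ at some point for scalars $\lambda,\mu$, then the matrix forms \eqref{g} and \eqref{metric-f} yield $(\lambda-\mu)A = 0$ from the $(1,1)$-entry and $(\lambda+\mu)B = 0$ from the $(3,3)$-entry; since $A>0$ and $B>0$, this forces $\lambda=\mu=0$.

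For the forward direction, assume $(M, g, Q)$ is Einstein, so that \eqref{E} holds with some smooth function $\alpha$. Subtracting $\alpha g$ from \eqref{rho-AE} gives
\begin{equation*}
\left(\frac{3\tau+\tau^{*}}{8}-\alpha\right)g(x,y)+\frac{3\tau^{*}+\tau}{8}\,\tilde{g}(x,y)=0.
\end{equation*}
By the pointwise linear independence of $g$ and $\tilde{g}$, both coefficients must vanish. In particular, the coefficient of $\tilde g$ gives $3\tau^{*}+\tau = 0$, i.e.\ $\tau^{*}=-\tau/3$, which is \eqref{tau-E}.

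Conversely, assuming $\tau^{*}=-\tau/3$ and substituting directly into \eqref{rho-AE} eliminates the $\tilde g$ term and yields
\begin{equation*}
\rho(x,y)=\frac{3\tau - \tau/3}{8}\,g(x,y)=\frac{\tau}{3}\,g(x,y),
\end{equation*}
so $(M, g, Q)$ satisfies \eqref{E} with $\alpha = \tau/3$ and is Einstein.

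There is no real obstacle to the argument; the only point requiring attention is the linear independence of $g$ and $\tilde{g}$, and this is immediate from the opposite signs appearing in the first two diagonal entries of \eqref{g} and \eqref{metric-f}. The corollary is then a direct two-line computation in each direction.
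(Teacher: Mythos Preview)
Your proof is correct and follows essentially the same approach as the paper's own argument: both directions compare the Einstein condition \eqref{E} with the explicit form \eqref{rho-AE} and read off the vanishing of the $\tilde g$-coefficient. The paper's proof is considerably terser (it simply says ``Comparing \eqref{E} and \eqref{rho-AE} we get \eqref{tau-E}''), whereas you make explicit the pointwise linear independence of $g$ and $\tilde g$ that justifies this comparison --- a welcome clarification, but not a different method.
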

\begin{proof}
Comparing \eqref{E} and \eqref{rho-AE} we get \eqref{tau-E}. Vice versa, by subtituting \eqref{tau-E} into \eqref{rho-AE} we obtain
\begin{equation}\label{E2}
\rho(x, y) = \frac{\tau}{3}g(x, y),
\end{equation}
i.e. $(M, g, Q)$ is an Einstein manifold.
\end{proof}
\begin{proposition}\label{p4.4}
 Let the Levi-Civita connection $\nabla$ of $g$ be a locally flat connection on the manifold $(M, \tilde{g}, Q)$. Then $(M, \tilde{g}, Q)$ is an almost Einstein manifold and the Ricci tensor $\tilde{\rho}$ has the form
\begin{equation*} \tilde{\rho}(x, y) = \frac{3\tilde{\tau}^{*}+\tilde{\tau}}{8}g(x,y)+\frac{3\tilde{\tau}+\tilde{\tau}^{*}}{8}\tilde{g}(x, y).\end{equation*}
\end{proposition}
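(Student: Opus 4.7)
The plan is to mirror the argument of Proposition~\ref{p4.3}, exchanging the roles of $g$ and $\tilde{g}$, and to use the already-established Theorem~\ref{connR-R} as the workhorse. The key observation is that the relation \eqref{con-AE} between $\tilde{\rho}$ and $\rho$ was derived without any assumption on which metric is flat, so it can be exploited in either direction.

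First, I would unpack what local flatness of $\nabla$ means here. Since $\nabla$ is the Levi-Civita connection of $g$, saying that $\nabla$ is locally flat on $(M, \tilde{g}, Q)$ is a statement about the \emph{curvature tensor} $R$ of $\nabla$, namely $R = 0$. From the definitions \eqref{def-rho} of $\rho$ and $\tau$ and the definition \eqref{def-rho*} of $\tau^{*}$, the vanishing of $R$ forces $\rho = 0$, and then $\tau = g^{ij}\rho(e_i, e_j) = 0$ as well as $\tau^{*} = \tilde{g}^{ij}\rho(e_i, e_j) = 0$.

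Next, I would substitute $\rho = 0$, $\tau = 0$, and $\tau^{*} = 0$ directly into the identity \eqref{con-AE} of Theorem~\ref{connR-R}. The terms involving $\rho$, $\tau$, and $\tau^{*}$ drop out, leaving
\begin{equation*}
\tilde{\rho}(x,y) = \frac{1}{8}\big(3\tilde{\tau}^{*}+\tilde{\tau}\big)g(x,y) + \frac{1}{8}\big(3\tilde{\tau}+\tilde{\tau}^{*}\big)\tilde{g}(x,y),
\end{equation*}
which is precisely the formula claimed. The coefficients $\tfrac{1}{8}(3\tilde{\tau}^{*}+\tilde{\tau})$ and $\tfrac{1}{8}(3\tilde{\tau}+\tilde{\tau}^{*})$ are smooth functions on $M$, so $\tilde{\rho}$ is a linear combination (with smooth coefficients) of the metric $\tilde{g}$ of the manifold $(M, \tilde{g}, Q)$ and its associated metric $g$; this is the natural analogue of Definition~\ref{defAE} for $(M, \tilde{g}, Q)$, whence $(M, \tilde{g}, Q)$ is almost Einstein.

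I do not expect any real obstacle: the proposition is essentially a direct corollary of Theorem~\ref{connR-R}, with the only conceptual point being the remark that \eqref{con-AE} is symmetric enough in the two metrics that setting $R = 0$ (rather than $\tilde{R} = 0$, as in Proposition~\ref{p4.3}) gives the dual conclusion. If anything, the one bookkeeping step worth stating explicitly is the implication $\rho = 0 \Rightarrow \tau = \tau^{*} = 0$, which is just the definition of the two scalar traces of $\rho$.
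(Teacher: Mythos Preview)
Your proposal is correct and follows exactly the approach the paper intends: Proposition~\ref{p4.4} is stated in the paper without a written proof, being the evident dual of Proposition~\ref{p4.3}, and your argument---set $R=0$, hence $\rho=\tau=\tau^{*}=0$, and substitute into \eqref{con-AE}---is precisely that dual argument carried out. There is nothing to add.
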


\section{Curvature properties}\label{Sec:5}
In this section we consider special bases of the tangent space $T_{p}M$ of $(M, g, Q)$. We find a relation between Ricci curvatures of $(M, g, Q)$ and $(M, \tilde{g}, Q)$ in the direction of a nonzero vector. We also obtain sectional curvatures of 2-planes formed by vectors of such bases in case when $(M, g, Q)$ is an almost Einstein manifold. For this purpose we recall definitions of these curvatures.

The Ricci curvature, with respect to $g$, in the direction of a nonzero vector $x$ is the value \begin{equation}\label{Ricicurv}
    r(x)=\frac{\rho(x,x)}{g(x,x)}.
\end{equation}

The sectional curvature of a non-degenerate $2$-plane $\{x, y\}$ spanned by the vectors $x, y \in T_{p}M$ is the value
\begin{equation}\label{3.3}
    k(x,y)=\frac{R(x, y, x, y)}{g(x, x)g(y, y)-g^{2}(x, y)}.
\end{equation}
\begin{definition}
Every basis of the type $\{x, Qx, Q^{2}x\}$, $\{x, Q^{2}x, Q^{3}x\}$,\newline $\{x, Qx, Q^{3}x\}$ and $\{Qx, Q^{2}x, Q^{3}x\}$ of $T_{p}M$ $(p\in M)$ is called a $Q$-\textit{basis}. In this case we say that \textit{the vector $x$ induces a $Q$-basis of} $T_{p}M$.
\end{definition}
\begin{lemma}
The vector $x(x^{1}, x^{2}, x^{3})$ induces a $Q$-basis of $T_{p}M$ if and only if $x^{3}\big((x^{1})^{2}+(x^{2})^{2}\big)\neq 0$.
\end{lemma}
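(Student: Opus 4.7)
The plan is to translate the question into the non-vanishing of four explicit $3\times 3$ determinants and observe that they all evaluate to the same quantity. From the matrix \eqref{Q} one computes directly
\begin{equation*}
Qx=(-x^{2},x^{1},x^{3}),\quad Q^{2}x=(-x^{1},-x^{2},x^{3}),\quad Q^{3}x=(x^{2},-x^{1},x^{3}),
\end{equation*}
which makes it transparent that $Q$ acts as a quarter-turn in the first two coordinates and as the identity on the third; in particular every iterate of $x$ has third coordinate equal to $x^{3}$. I would record this at the outset, since it dictates the geometric shape of the argument.

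For the necessity of the stated inequality I would argue geometrically, avoiding any determinant computation. If $x^{3}=0$, all four vectors $x,Qx,Q^{2}x,Q^{3}x$ lie in the plane of the first two coordinates, so any three of them are linearly dependent. If instead $(x^{1})^{2}+(x^{2})^{2}=0$, i.e.\ $x^{1}=x^{2}=0$, every iterate of $x$ coincides with $(0,0,x^{3})$, so the four vectors together span at most a line. Either way no $Q$-basis can be formed, and the condition $x^{3}\bigl((x^{1})^{2}+(x^{2})^{2}\bigr)\neq 0$ is necessary.

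For sufficiency I would compute one of the four determinants, say the one whose columns are $x$, $Qx$, $Q^{2}x$. Expanding along the third row, all of whose entries equal $x^{3}$, produces $2x^{3}\bigl((x^{1})^{2}+(x^{2})^{2}\bigr)$, which is nonzero under the hypothesis. The remaining three triples are handled by identical expansions and each gives $\pm 2x^{3}\bigl((x^{1})^{2}+(x^{2})^{2}\bigr)$; this uniform behaviour is not accidental, because the identity $x-Qx+Q^{2}x-Q^{3}x=0$---an immediate consequence of the explicit formulas above---already forces the four triples $\{x,Qx,Q^{2}x\}$, $\{x,Qx,Q^{3}x\}$, $\{x,Q^{2}x,Q^{3}x\}$ and $\{Qx,Q^{2}x,Q^{3}x\}$ to have proportional determinants. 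The whole argument is coordinate bookkeeping and I do not anticipate a real obstacle; the one piece of care required is to verify that all four $Q$-bases listed in the preceding definition are addressed, not merely the first.
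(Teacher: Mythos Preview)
Your proposal is correct and follows essentially the same route as the paper: compute the iterates $Qx,Q^{2}x,Q^{3}x$ and evaluate the $3\times 3$ determinant of $\{x,Qx,Q^{2}x\}$ to obtain $2x^{3}\bigl((x^{1})^{2}+(x^{2})^{2}\bigr)$, then observe that the remaining three triples behave identically. Your separate geometric necessity argument and the identity $x-Qx+Q^{2}x-Q^{3}x=0$ are pleasant additions that the paper omits, but they do not change the underlying strategy.
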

\begin{proof}
By \eqref{Q}, we obtain that the images of a vector $x(x^{1}, x^{2}, x^{3})$ have components $Qx(-x^{2}, x^{1}, x^{3})$, $Q^{2}x(-x^{1}, -x^{2}, x^{3})$, $Q^{3}x(x^{2}, -x^{1}, x^{3})$. Thus, we note that $Q$ is a rotation matrix in the coordinate plane $Ox^{1}x^{2}$.

We consider the system of vectors $\{x, Qx, Q^{2}x\}$. The triple product of these vectors is $$\left|
                                  \begin{array}{ccc}
                                    x^{1} & x^{2} & x^{3}\\
                                    -x^{2} & x^{1} & x^{3} \\
                                    -x^{1} & -x^{2} & x^{3}\\
                                  \end{array}
                                \right|=2x^{3}\big((x^{1})^{2}+(x^{2})^{2}\big).$$
Therefore the inequality $x^{3}\big((x^{1})^{2}+(x^{2})^{2}\big)\neq 0$ is a necessary and sufficient condition for $\{x, Qx, Q^{2}x\}$ to be a basis of $T_{p}M$.
The same is the result for the systems $\{x, Qx, Q^{3}x\}$, $\{x, Q^{2}x, Q^{3}x\}$ and $\{Qx, Q^{2}x, Q^{3}x\}$.

So, if $\{x, Qx, Q^{2}x\}$ is a basis of $T_{p}M$, then $\{x, Qx, Q^{3}x\}$, $\{x, Q^{2}x, Q^{3}x\}$ and $\{Qx, Q^{2}x, Q^{3}x\}$ are also bases of $T_{p}M$.
\end{proof}
\begin{lemma}\label{angles}
If a vector $x$ induces a $Q$-basis of $T_{p}M$, $\varphi$ is the angle between $x$ and $Qx$ with respect to $g$, $\psi$ is the angle between $x$ and $Q^{2}x$ with respect to $g$, then
   \begin{equation}\label{varphi}
 \angle(x, Q^{3}x)=\varphi, \quad \varphi\in(0, \frac{\pi}{2}\big),\quad \varphi<\psi.
   \end{equation}
   \end{lemma}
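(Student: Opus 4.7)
The plan is to combine the isometry property of $Q$ with direct coordinate computations using the explicit matrix forms of $Q$ and $g$ from \eqref{Q} and \eqref{g}. Since $Q$ preserves $g$, the three vectors $Qx$, $Q^2x$, $Q^3x$ all have the same norm as $x$, so every angle statement reduces to a statement about the inner products $g(x, Q^kx)$ for $k=1,2,3$.

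First I would establish $\angle(x, Q^3x)=\varphi$. Using \eqref{g-suglasuvane} once together with $Q^4=\id$, one gets
\begin{equation*}
g(x, Q^3x) = g(Qx, Q^4x) = g(Qx, x),
\end{equation*}
while $\|Q^3x\|=\|Qx\|=\|x\|$. Hence the cosines of $\angle(x, Q^3x)$ and $\angle(x, Qx)$ coincide, giving the desired equality.

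Next I would bound $\varphi$. From \eqref{Q}, $Qx$ has coordinates $(-x^2, x^1, x^3)$, so a direct computation with the diagonal metric \eqref{g} yields $g(x, Qx) = B(x^3)^2$. Because $B>0$ and the preceding lemma forces $x^3\neq 0$, this is strictly positive, so $\cos\varphi>0$, i.e.\ $\varphi<\pi/2$. Moreover, since $\{x, Qx, Q^2x\}$ is a basis (in particular $x$ and $Qx$ are linearly independent), the Cauchy–Schwarz inequality is strict and $\varphi>0$.

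Finally, for $\varphi<\psi$, I would compute $Q^2x = (-x^1, -x^2, x^3)$ and obtain
\begin{equation*}
g(x, Q^2x) = -A\bigl((x^1)^2+(x^2)^2\bigr) + B(x^3)^2,
\end{equation*}
so that $g(x, Qx) - g(x, Q^2x) = A\bigl((x^1)^2+(x^2)^2\bigr)>0$. Dividing both sides by the common quantity $\|x\|^2$ gives $\cos\varphi>\cos\psi$, hence $\varphi<\psi$. There is no genuine obstacle here; the only point requiring a moment of thought is the first step, where one must recognize that the identity $g(x, Q^3x)=g(x, Qx)$ comes directly from shifting $Q$ across the inner product and using $Q^4=\id$, rather than from any separate computation.
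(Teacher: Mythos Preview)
Your proof is correct and follows essentially the same approach as the paper: both reduce the angle statements to the explicit values $g(x,Qx)=B(x^3)^2$ and $g(x,Q^2x)=-A\bigl((x^1)^2+(x^2)^2\bigr)+B(x^3)^2$ and then invoke $A>0$, $B>0$, and the $Q$-basis condition. The only cosmetic difference is that you obtain $g(x,Q^3x)=g(x,Qx)$ via the isometry property and $Q^4=\id$, whereas the paper simply computes both sides; you also make explicit the $\varphi>0$ part via strict Cauchy--Schwarz, which the paper leaves implicit.
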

   \begin{proof}
   Let $x(x^{1}, x^{2}, x^{3})$ induce a $Q$-basis of $T_{p}M$. Using \eqref{Q}, \eqref{g-suglasuvane} and \eqref{g} we find
   \begin{equation*}%\label{g-xx}
   \begin{split}
   g(x, x)&=g(Qx, Qx)=g(Q^{2}x, Q^{2}x)\\&=g(Q^{3}x, Q^{3}x)
= A\big((x^{1})^{2}+(x^{2})^{2}\big)+B(x^{3})^{2},\\
  g(x, Qx)&=g(x, Q^{3}x)=B(x^{3})^{2},\quad
  g(x,Q^{2}x)=-A\big((x^{1})^{2}+(x^{2}\big)^{2})+B(x^{3})^{2}.
   \end{split}
   \end{equation*}
Having in mind the above equalities and the well-known formula $$\cos\angle(x,y)=\frac{g(x,y)}{\sqrt{g(x,x)}\sqrt{g(y,y)}},$$ we calculate
\begin{equation*}%\label{g-cos}
   \begin{split}
   \cos\varphi =\frac{B(x^{3})^{2}}{A\big((x^{1})^{2}+(x^{2})^{2}\big)+B(x^{3})^{2}},\
   \cos\psi =\frac{-A\big((x^{1})^{2}+(x^{2})^{2}\big)+B(x^{3})^{2}}{A\big((x^{1})^{2}+(x^{2})^{2}\big)+B(x^{3})^{2}}.
   \end{split}
   \end{equation*}
Since $A>0$ and $B>0$ we get $\cos\varphi>0$ and $ \cos\psi<\cos\varphi$, i.e. $0<\varphi<\frac{\pi}{2}$ and $\psi>\varphi$.   \end{proof}
   \begin{remark} The Lemma~\ref{angles} shows that an orthogonal $Q$-basis of $T_{p}M$ does not exist.
   \end{remark}
Due to Theorem~\ref{connR-R}, Proposition~\ref{p4.3} and Proposition~\ref{p4.4} we establish the following statements.
\begin{theorem}
Let a vector $x$ induce a $Q$-basis and let $\psi$ be the angle between $x$ and $Q^{2}x$. If $r$ and $\tilde{r}$ are the Ricci curvatures in the direction of $x$ with respect to the metrics of $(M, g, Q)$ and $(M, \tilde{g}, Q)$, then
 \begin{equation}\label{con-r}
     \tilde{r}(x) = \frac{1}{\cos\psi}r(x)+\frac{1}{8\cos\psi}(3\tilde{\tau}^{*}+\tilde{\tau}-3\tau-\tau^{*})+\frac{1}{8}(3\tilde{\tau}+\tilde{\tau}^{*}-3\tau^{*}-\tau),
\end{equation}
where $\psi\neq\frac{\pi}{2}$.
\end{theorem}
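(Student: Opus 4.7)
The plan is to specialize Theorem~\ref{connR-R} to the diagonal $y=x$ and then divide by $\tilde{g}(x,x)$, after identifying the ratio $\tilde{g}(x,x)/g(x,x)$ geometrically as $\cos\psi$.

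First I would set $y=x$ in \eqref{con-AE} to get
\begin{equation*}
\tilde{\rho}(x,x) = \rho(x,x)+\frac{1}{8}(3\tilde{\tau}^{*}+\tilde{\tau}-3\tau-\tau^{*})g(x,x)+\frac{1}{8}(3\tilde{\tau}+\tilde{\tau}^{*}-3\tau^{*}-\tau)\tilde{g}(x,x).
\end{equation*}
Since $x$ induces a $Q$-basis, Lemma~\ref{angles} applies. By \eqref{metricf} and $P=Q^2$ we have $\tilde{g}(x,x)=g(x,Q^{2}x)$, so the formula for $\cos\psi$ recorded in the proof of Lemma~\ref{angles} yields precisely
\begin{equation*}
\tilde{g}(x,x)=g(x,x)\cos\psi.
\end{equation*}
The hypothesis $\psi\neq\tfrac{\pi}{2}$ guarantees $\cos\psi\neq 0$, so $\tilde{g}(x,x)\neq 0$ and the Ricci curvature $\tilde{r}(x)=\tilde{\rho}(x,x)/\tilde{g}(x,x)$ is well-defined.

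Next I would divide the displayed equation by $\tilde{g}(x,x)$. The left-hand side becomes $\tilde{r}(x)$ by \eqref{Ricicurv} applied to $\tilde{g}$. On the right-hand side, using $g(x,x)/\tilde{g}(x,x)=1/\cos\psi$, the first term becomes $\rho(x,x)/\tilde{g}(x,x)=r(x)/\cos\psi$ after inserting a factor $g(x,x)/g(x,x)$; the second term contributes the scalar factor $1/\cos\psi$ multiplying $\tfrac{1}{8}(3\tilde{\tau}^{*}+\tilde{\tau}-3\tau-\tau^{*})$; the third term remains unchanged. Collecting these gives exactly \eqref{con-r}.

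There is essentially no hard step here, since the main algebraic work has already been done in Theorem~\ref{connR-R}; the only substantive observation is the identification $\tilde{g}(x,x)=g(x,x)\cos\psi$, which requires recognizing $\tilde{g}(x,x)$ as $g(x,Q^{2}x)$ and then reading off the cosine formula from Lemma~\ref{angles}. The role of the condition $\psi\neq\pi/2$ is twofold: it keeps $\tilde{g}(x,x)$ nondegenerate so that the Ricci curvature $\tilde{r}(x)$ makes sense, and it permits the division by $\cos\psi$ that produces the stated identity.
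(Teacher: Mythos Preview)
Your proof is correct and follows essentially the same approach as the paper: specialize \eqref{con-AE} to $y=x$, use the identification $\tilde{g}(x,x)=g(x,Q^{2}x)=g(x,x)\cos\psi$ (which the paper records as \eqref{g-cos2}), and divide through by $\tilde{g}(x,x)$. Your version simply spells out the steps in more detail than the paper's one-line proof.
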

\begin{proof}
We apply \eqref{Ricicurv} to $r$ and also to $\tilde{r}$, and bearing in mind \eqref{con-AE} and
\begin{equation}\label{g-cos2}
\tilde{g}(x, x)=g(x, Q^{2}x)=g(x, x)\cos\psi,
\end{equation}
 we get \eqref{con-r}.
\end{proof}
\begin{proposition}\label{th3.4}
   Let the Levi-Civita connection $\tilde{\nabla}$ of $\tilde{g}$ be a locally flat connection on $(M, g, Q)$. If a vector $x$ induces a $Q$-basis, then the Ricci curvatures in the direction of the basis vectors are
\begin{equation}\label{ricc}
     r(x)=r(Qx)=r(Q^{2}x)=r(Q^{3}x)=\frac{\cos\psi}{8}(3\tau^{*}+\tau)+\frac{1}{8}(3\tau+\tau^{*}),
\end{equation}
where $\psi=\angle(x, Q^{2}x)$.
\end{proposition}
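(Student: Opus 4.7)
The plan is to apply Proposition~\ref{p4.3} directly and reduce everything to the case $y=x$. Under the hypothesis that the Levi-Civita connection $\tilde{\nabla}$ of $\tilde{g}$ is locally flat, Proposition~\ref{p4.3} supplies the closed form
\begin{equation*}
\rho(y,y)=\frac{3\tau+\tau^{*}}{8}\,g(y,y)+\frac{3\tau^{*}+\tau}{8}\,\tilde{g}(y,y)
\end{equation*}
valid for every tangent vector $y$. First I would write $r(x)=\rho(x,x)/g(x,x)$ using the definition \eqref{Ricicurv}, substitute this closed form with $y=x$, and replace $\tilde{g}(x,x)$ by $g(x,x)\cos\psi$ via \eqref{g-cos2}. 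The ratio $\tilde{g}(x,x)/g(x,x)=\cos\psi$ collapses the expression to exactly the claimed value of $r(x)$.

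Next I would verify that the same number is obtained for the three companion vectors $Qx,Q^{2}x,Q^{3}x$. For the $g$-denominator this is immediate: iterating the compatibility \eqref{g-suglasuvane} yields $g(Q^{k}x,Q^{k}x)=g(x,x)$ for every $k\in\{0,1,2,3\}$. For the $\tilde{g}$-numerator, because $P=Q^{2}$ commutes with every power of $Q$, the definition \eqref{metricf} followed by a $k$-fold application of \eqref{g-suglasuvane} gives
\begin{equation*}
\tilde{g}(Q^{k}x,Q^{k}x)=g(Q^{k}x,Q^{k}Q^{2}x)=g(x,Q^{2}x)=\tilde{g}(x,x).
\end{equation*}
Hence the ratio $\tilde{g}(Q^{k}x,Q^{k}x)/g(Q^{k}x,Q^{k}x)$ equals $\cos\psi$ for each $k$. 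Inserting this into the almost-Einstein formula applied at $y=Q^{k}x$ produces the same scalar \eqref{ricc} in all four cases.

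No genuine obstacle is expected here: once Proposition~\ref{p4.3} is available the argument is essentially bookkeeping. The only point requiring care is exploiting the commutativity of $P=Q^{2}$ with each $Q^{k}$ so that the isometry identity $g(Qu,Qv)=g(u,v)$ can be used to transport the $\tilde{g}$-norm of $Q^{k}x$ back to that of $x$, thereby forcing the Ricci curvatures along the four vectors of the $Q$-basis to coincide.
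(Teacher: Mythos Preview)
Your proposal is correct and follows essentially the same route as the paper: invoke Proposition~\ref{p4.3} to obtain the almost-Einstein form of $\rho$, use the $Q$-isometry \eqref{g-suglasuvane} together with \eqref{metricf} to see that $g(Q^{k}x,Q^{k}x)$ and $\tilde g(Q^{k}x,Q^{k}x)$ are independent of $k$, and then divide by $g(x,x)$ using \eqref{g-cos2}. The only cosmetic difference is that the paper first records the equality $\rho(x,x)=\rho(Qx,Qx)=\rho(Q^{2}x,Q^{2}x)=\rho(Q^{3}x,Q^{3}x)$ and then passes to the quotient, whereas you compute $r(x)$ first and afterwards check invariance under $Q^{k}$.
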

\begin{proof}
Since $\rho$ is given by \eqref{rho-AE}, using \eqref{g-suglasuvane} and \eqref{metricf}, we obtain
\begin{equation}\label{rhoL2}
\begin{split}
    \rho(x, x)&=\rho(Qx, Qx)=\rho(Q^{2}x, Q^{2}x)\\&=\rho(Q^{3}x, Q^{3}x)=\frac{1}{8}(3\tau+\tau^{*})g(x,x)+\frac{1}{8}(3\tau^{*}+\tau)\tilde{g}(x,x).
    \end{split}
\end{equation}
Let a vector $x$ induce a $Q$-basis. Hence equalities \eqref{Ricicurv}, \eqref{g-cos2} and \eqref{rhoL2} imply \eqref{ricc}.
\end{proof}

\begin{corollary}
If the Ricci tensor $\rho$ on $(M, g, Q)$ is determined by \eqref{E2}, and a vector $x$ induces a $Q$-basis, then the Ricci curvatures in the direction of the basis vectors are
\begin{equation*}
     r(x)=r(Qx)=r(Q^{2}x)=r(Q^{3}x)=\frac{\tau}{3}.
\end{equation*}
\end{corollary}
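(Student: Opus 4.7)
The plan is essentially a direct substitution, since the hypothesis \eqref{E2} states that $\rho$ is pointwise a scalar multiple of $g$. First, I would apply \eqref{E2} with $y=x$ to obtain $\rho(x,x)=\frac{\tau}{3}g(x,x)$. Dividing by $g(x,x)$ and invoking the definition \eqref{Ricicurv} of the Ricci curvature in the direction of a nonzero vector yields $r(x)=\tau/3$ immediately.

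Next, I would extend this to the other basis vectors $Qx,Q^{2}x,Q^{3}x$. For each $k\in\{1,2,3\}$, \eqref{E2} gives $\rho(Q^{k}x,Q^{k}x)=\frac{\tau}{3}g(Q^{k}x,Q^{k}x)$, so the ratio defining $r(Q^{k}x)$ is again $\tau/3$, independent of $g(Q^{k}x,Q^{k}x)$. (As a sanity check, the isometry property \eqref{g-suglasuvane} in fact implies $g(Q^{k}x,Q^{k}x)=g(x,x)$, so in this situation all four denominators coincide, although this is not even needed.) Alternatively, one can deduce the statement directly from Proposition~\ref{th3.4}: substitute the Einstein condition \eqref{tau-E}, namely $\tau^{*}=-\tau/3$, into \eqref{ricc}, and observe that the coefficient of $\cos\psi$ collapses to $\frac{1}{8}(3\tau^{*}+\tau)=0$, so the $\cos\psi$ dependence disappears and the remaining term simplifies to $\frac{1}{8}(3\tau+\tau^{*})=\frac{1}{8}\cdot\frac{8\tau}{3}=\tau/3$.

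There is no real obstacle here; the statement is a one-line corollary of the Einstein condition together with the definition of Ricci curvature, and the only thing worth flagging is that \eqref{E2} is strong enough to force the answer to be independent of the direction, so the four equalities in the conclusion follow uniformly without needing the $Q$-basis hypothesis beyond guaranteeing that each $Q^{k}x$ is a nonzero vector for which \eqref{Ricicurv} is defined.
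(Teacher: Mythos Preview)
Your proposal is correct. Your ``alternative'' route---substituting \eqref{tau-E} into \eqref{ricc} so that the $\cos\psi$ term vanishes---is exactly the paper's proof, while your primary argument (applying \eqref{E2} directly in \eqref{Ricicurv}) is a shorter and equally valid shortcut that bypasses Proposition~\ref{th3.4} altogether.
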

 The proof follows directly by susbtituting \eqref{tau-E} into \eqref{ricc}.
\begin{proposition}
 Let the Levi-Civita connection $\nabla$ of $g$ be a locally flat connection on $(M, \tilde{g}, Q)$.
If a vector $x$ induces a $Q$-basis, then the Ricci curvatures in the direction of the basis vectors are
\begin{equation*} \tilde{r}(x)=\tilde{r}(Qx)=\tilde{r}(Q^{2}x)=\tilde{r}(Q^{3}x)=\frac{1}{8\cos\psi}(3\widetilde{\tau}^{*}+\widetilde{\tau})+\frac{1}{8}(3\widetilde{\tau}+\widetilde{\tau}^{*}),\quad \psi\neq\frac{\pi}{2}.
\end{equation*}
\end{proposition}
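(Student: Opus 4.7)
The strategy is to imitate the proof of Proposition~\ref{th3.4}, but now starting from Proposition~\ref{p4.4} in place of Proposition~\ref{p4.3}. Thus the plan is: (i) use Proposition~\ref{p4.4} to write $\tilde\rho$ as a combination of $g$ and $\tilde g$; (ii) show by the $Q$-invariance of both metrics that $\tilde\rho(Q^{k}x, Q^{k}x)$ is independent of $k$; (iii) divide by $\tilde g(x,x)$ and rewrite using \eqref{g-cos2}.

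Concretely, under the hypothesis that $\nabla$ is locally flat on $(M,\tilde g, Q)$, Proposition~\ref{p4.4} gives
\begin{equation*}
\tilde\rho(y,z) = \tfrac{3\tilde\tau^{*}+\tilde\tau}{8}\,g(y,z) + \tfrac{3\tilde\tau+\tilde\tau^{*}}{8}\,\tilde g(y,z).
\end{equation*}
The compatibility \eqref{g-suglasuvane} iterated (using $Q^{4}=\id$) yields $g(Q^{k}x,Q^{k}x)=g(x,x)$ for $k=0,1,2,3$; combining with \eqref{metricf} and $P=Q^{2}$ we obtain
\begin{equation*}
\tilde g(Q^{k}x,Q^{k}x) = g(Q^{k}x,Q^{k+2}x) = g(x,Q^{2}x) = \tilde g(x,x),
\end{equation*}
so $Q$ is an isometry of $\tilde g$ as well. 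Substituting into the displayed formula for $\tilde\rho$ then gives $\tilde\rho(Q^{k}x,Q^{k}x)=\tilde\rho(x,x)$ for each $k$, which is the $Q$-invariance we need.

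Finally, the Ricci curvature with respect to $\tilde g$ in the direction of $x$ is $\tilde r(x)=\tilde\rho(x,x)/\tilde g(x,x)$, and Lemma~\ref{angles} together with \eqref{g-cos2} lets me replace $g(x,x)/\tilde g(x,x)$ by $1/\cos\psi$ (this is why the restriction $\psi\neq \pi/2$, equivalently $\cos\psi\neq 0$, is imposed). Dividing the expression for $\tilde\rho(x,x)$ by $\tilde g(x,x)$ therefore yields
\begin{equation*}
\tilde r(x) = \tfrac{3\tilde\tau^{*}+\tilde\tau}{8\cos\psi} + \tfrac{3\tilde\tau+\tilde\tau^{*}}{8},
\end{equation*}
and the $Q$-invariance established in step (ii) shows the same value is obtained for $Qx$, $Q^{2}x$, $Q^{3}x$. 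There is no genuine obstacle here—every ingredient (the shape of $\tilde\rho$, the isometry action of $Q$ on both metrics, and the identity $\tilde g(x,x)=g(x,x)\cos\psi$) is already at our disposal; the argument is a direct dualisation of Proposition~\ref{th3.4} and amounts only to careful bookkeeping.
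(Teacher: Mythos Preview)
Your proposal is correct and mirrors exactly the approach the paper intends: in fact the paper gives no separate proof of this proposition, leaving it as the evident dual of Proposition~\ref{th3.4} via Proposition~\ref{p4.4}, which is precisely what you carry out. Your bookkeeping (the $Q$-invariance of both $g$ and $\tilde g$, and the use of \eqref{g-cos2} to convert $g(x,x)/\tilde g(x,x)$ into $1/\cos\psi$) is accurate and complete.
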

 The existence of a class of almost Einstein manifolds is confirmed by Proposition~\ref{p4.3}. In the next theorem, we express the curvature tensor $R$ on an almost Einstein manifold $(M, g, Q)$ by both structures $g$ and $Q$.
\begin{theorem}\label{th2.4}
Let the Ricci tensor $\rho$ on $(M, g, Q)$ be determined by \eqref{rho-AE}. Then the curvature tensor $R$ has the form
 \begin{equation}\label{fR}
R = \frac{\tau+\tau^{*}}{4}\pi_{1}+\frac{3\tau^{*}+\tau}{8}\pi_{2},
\end{equation}
where \begin{align}\label{pi-pi} \nonumber \pi_{1}(x, y, z, u )&=g(y, z)g(x, u) - g(x, z)g(y, u),\\
\pi_{2}(x, y, z, u)&=\mbox{}g(y, z)\tilde{g}(x, u)+g(x, u)\tilde{g}(y, z)\\\nonumber &-g(x, z)\tilde{g}(y, u)-g(y, u)\tilde{g}(x, z).
\end{align}
\end{theorem}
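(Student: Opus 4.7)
The plan is to invoke the classical identity that in dimension three the Riemann curvature tensor is completely determined by the Ricci tensor and the scalar curvature, which follows from the vanishing of the Weyl conformal tensor when $n=3$:
\begin{equation*}
\begin{split}
R(x,y,z,u) = \; & g(y,z)\rho(x,u) - g(y,u)\rho(x,z) \\
& + g(x,u)\rho(y,z) - g(x,z)\rho(y,u) \\
& - \tfrac{\tau}{2}\bigl(g(y,z)g(x,u) - g(x,z)g(y,u)\bigr).
\end{split}
\end{equation*}
First I would record this identity in the notation of the paper; its derivation from $W=0$ with $n=3$ is standard and can be cited rather than re-proved here.

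Next I would substitute the hypothesis \eqref{rho-AE}, i.e.\ $\rho = \alpha\, g + \beta\,\tilde g$ with $\alpha=(3\tau+\tau^{*})/8$ and $\beta=(3\tau^{*}+\tau)/8$, into each of the four $\rho$-terms above. The expansion naturally splits into two families: the pure $g\otimes g$ contributions, which have precisely the antisymmetry pattern of $\pi_{1}$ and assemble into $2\alpha\,\pi_{1}$; and the mixed $g\otimes\tilde g$ contributions, which match the four-term combination defining $\pi_{2}$ in \eqref{pi-pi} and therefore assemble into $\beta\,\pi_{2}$. Combining the $g\otimes g$ part with the scalar correction $-\tfrac{\tau}{2}\pi_{1}$ from the Weyl-free formula gives a coefficient $2\alpha - \tau/2 = (\tau+\tau^{*})/4$ in front of $\pi_{1}$, while the coefficient of $\pi_{2}$ remains $\beta = (3\tau^{*}+\tau)/8$; this is exactly \eqref{fR}.

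The main (and really only) obstacle is the sign and symmetry bookkeeping in the four substituted $\rho$-terms: one must verify that the pure-$g$ pieces combine additively into $\pi_{1}$ rather than cancelling, and that the mixed pieces assemble into $\pi_{2}$ with the specific pattern of \eqref{pi-pi}. Once this is carefully checked, both coefficients drop out immediately and no further curvature computation is needed.
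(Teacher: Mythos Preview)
Your proposal is correct and follows exactly the same route as the paper: invoke the three-dimensional identity expressing $R$ in terms of $\rho$, $g$ and $\tau$ (the paper cites \cite{rashevski} rather than the vanishing of the Weyl tensor, but the formula is identical), then substitute \eqref{rho-AE} and collect the $g\otimes g$ and $g\otimes\tilde g$ pieces into $\pi_{1}$ and $\pi_{2}$. Your coefficient check $2\alpha-\tau/2=(\tau+\tau^{*})/4$ and $\beta=(3\tau^{*}+\tau)/8$ is exactly what the paper leaves implicit in its one-line ``by substituting'' conclusion.
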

\begin{proof}
It is known that the curvature tensor $R$ on a 3-dimensional Riemannian manifold is completely determined by the Ricci tensor $\rho$ an the metric $g$ by \cite{rashevski}:
\begin{equation*}
\begin{split}
  R(x,y,z,u)&=-g(x, z)\rho(y, u)-g(y, u)\rho(x, z)+g(y, z)\rho(x, u)\\&+g(x, u)\rho(y, z)+\frac{\tau}{2}\big(g(x, z)g(y, u)-g(y, z)g(x, u)\big).
  \end{split}
\end{equation*}
By substituting \eqref{rho-AE} into the above equality we obtain \eqref{fR} and \eqref{pi-pi}.
\end{proof}
In the next corollary we suppose that $(M, g, Q)$ is an Einstein manifold.
\begin{corollary}
If the Ricci tensor on $(M, g, Q)$ is determined by \eqref{E2}, then the curvature tensor $R$ is expressed by
\begin{equation*}
R = \frac{\tau}{6}\pi_{1}.
\end{equation*}
\end{corollary}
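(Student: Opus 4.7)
The plan is to obtain the corollary as an immediate arithmetic consequence of Theorem~\ref{th2.4} together with the Einstein characterization established in the earlier corollary. Recall that assumption \eqref{E2} is precisely the Einstein condition $\rho(x,y)=\tfrac{\tau}{3}g(x,y)$; by the corollary following Proposition~\ref{p4.3}, this is equivalent to the scalar relation $\tau^{*}=-\tfrac{\tau}{3}$. Hence the hypotheses of Theorem~\ref{th2.4} are satisfied, and we are free to plug \eqref{tau-E} into the decomposition \eqref{fR}.

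Carrying this out step by step, I first rewrite the two coefficients in \eqref{fR} using $\tau^{*}=-\tfrac{\tau}{3}$. The coefficient of $\pi_{1}$ becomes
\[
\frac{\tau+\tau^{*}}{4}=\frac{\tau-\tfrac{\tau}{3}}{4}=\frac{\tau}{6},
\]
while the coefficient of $\pi_{2}$ becomes
\[
\frac{3\tau^{*}+\tau}{8}=\frac{-\tau+\tau}{8}=0.
\]
So the $\pi_{2}$–contribution drops out entirely, leaving $R=\tfrac{\tau}{6}\,\pi_{1}$, which is exactly the asserted formula.

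As an internal consistency check (and an alternative route should one prefer not to invoke Theorem~\ref{th2.4}), one can substitute $\rho(x,y)=\tfrac{\tau}{3}g(x,y)$ directly into the Rashevski identity for the curvature tensor in dimension three used inside the proof of Theorem~\ref{th2.4}; the four terms involving $\rho$ combine to $\tfrac{2\tau}{3}\bigl(g(y,z)g(x,u)-g(x,z)g(y,u)\bigr)$, and adding the remaining $\tfrac{\tau}{2}\bigl(g(x,z)g(y,u)-g(y,z)g(x,u)\bigr)$ yields the same coefficient $\tfrac{\tau}{6}$ in front of $\pi_{1}$.

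There is essentially no obstacle here: the statement is a one–line algebraic collapse of the almost Einstein formula \eqref{fR} once the Einstein relation \eqref{tau-E} is imposed. The only thing worth flagging explicitly is the equivalence between \eqref{E2} and \eqref{tau-E}, which was already proved in the preceding corollary and which makes Theorem~\ref{th2.4} directly applicable in the Einstein setting.
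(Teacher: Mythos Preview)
Your proof is correct and follows exactly the paper's approach: the paper's own proof simply says the result follows from \eqref{tau-E}, \eqref{fR} and \eqref{pi-pi}, which is precisely the substitution you carry out. Your additional consistency check via the three-dimensional curvature identity is a nice bonus but not needed.
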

\begin{proof}
The above equality follows directly from \eqref{tau-E}, \eqref{fR} and \eqref{pi-pi}.
\end{proof}

Let a vector $x$ induce a $Q$-basis. There are determined six 2-planes $\{x, Qx\}$, $\{x, Q^{2}x\}$, $\{x, Q^{3}x\}$, $\{Qx, Q^{2}x\}$, $\{Qx, Q^{3}x\}$ and $\{Q^{2}x, Q^{3}x\}$ in $T_{p}M$.
In the next theorem we establish the relations among the sectional curvatures of the 2-planes generated by the vectors $\{x, Qx, Q^{2}x, Q^{3}x\}$, the angles $\varphi$ and $\psi$, the scalar quantities $\tau$ and $\tau^{*}$.
\begin{theorem}\label{t2.5}
Let the Ricci tensor $\rho$ on $(M, g, Q)$ have the form \eqref{rho-AE} and let a vector $x$ induce a $Q$-basis. Then the sectional curvatures of the 2-planes, determined by the basis vectors, are
\begin{equation}\label{obmu}
\begin{split}
    k(x,Qx)&=k(Qx,Q^{2}x)= k(x,Q^{3}x)=k(Q^{2}x,Q^{3}x)\\&=-\frac{\tau+\tau^{*}}{4}+(\cos^{2}\varphi- \cos\psi)\frac{\tau+3\tau^{*}}{4(1-\cos^{2}\varphi)},\\
    k(x,Q^{2}x)&= k(Qx,Q^{3}x)=-\frac{\tau+\tau^{*}}{4},
    \end{split}
\end{equation}
where $\varphi=\angle(x, Qx)$, $\psi=\angle(x, Q^{2}x)$.
\end{theorem}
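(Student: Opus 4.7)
The plan is to plug the explicit form of $R$ from Theorem \ref{th2.4} into the definition \eqref{3.3} of sectional curvature for each of the six pairs of basis vectors, and to reduce all occurrences of $g$ and $\tilde{g}$ on the four vectors $x,Qx,Q^{2}x,Q^{3}x$ to expressions in $g(x,x)$, $\cos\varphi$, and $\cos\psi$ via the $Q$-invariance of $g$ and the identity $\tilde{g}(u,v)=g(u,Q^{2}v)$. Concretely, for any vectors $u,v$, direct expansion of \eqref{pi-pi} gives
\begin{equation*}
\pi_{1}(u,v,u,v)=-\bigl(g(u,u)g(v,v)-g(u,v)^{2}\bigr),\qquad \pi_{2}(u,v,u,v)=2g(u,v)\tilde{g}(u,v)-g(u,u)\tilde{g}(v,v)-g(v,v)\tilde{g}(u,u),
\end{equation*}
so Theorem \ref{th2.4} yields
\begin{equation*}
k(u,v)=-\frac{\tau+\tau^{*}}{4}+\frac{3\tau^{*}+\tau}{8}\cdot\frac{\pi_{2}(u,v,u,v)}{g(u,u)g(v,v)-g(u,v)^{2}}.
\end{equation*}
The $-\frac{\tau+\tau^{*}}{4}$ part is therefore common to all six sectional curvatures, and the whole computation reduces to evaluating the $\pi_{2}$-quotient.

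For the inner products I will repeatedly use that $g(Qx,Qy)=g(x,y)$ implies $g(Q^{i}x,Q^{j}x)=g(x,Q^{j-i}x)$, that $Q^{4}=\id$, and that the formulas already recorded in the proof of Lemma \ref{angles} give $g(x,Qx)=g(x,Q^{3}x)=g(x,x)\cos\varphi$ and $g(x,Q^{2}x)=g(x,x)\cos\psi$. Combined with $\tilde{g}(Q^{i}x,Q^{j}x)=g(Q^{i}x,Q^{j+2}x)$, this expresses every inner product needed as $g(x,x)$ times $1$, $\cos\varphi$, or $\cos\psi$.

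Next I will split the six pairs into two groups. For a consecutive pair such as $\{x,Qx\}$ the angle is $\varphi$ and one finds
\begin{equation*}
g(x,Qx)=g(x,x)\cos\varphi,\qquad \tilde{g}(x,Qx)=g(x,Q^{3}x)=g(x,x)\cos\varphi,\qquad \tilde{g}(x,x)=\tilde{g}(Qx,Qx)=g(x,x)\cos\psi,
\end{equation*}
so $\pi_{2}(x,Qx,x,Qx)=2g(x,x)^{2}(\cos^{2}\varphi-\cos\psi)$ while the denominator is $g(x,x)^{2}(1-\cos^{2}\varphi)$. The $Q$-invariance makes the same numbers appear for $\{Qx,Q^{2}x\}$, $\{Q^{2}x,Q^{3}x\}$, $\{Q^{3}x,x\}$, which yields the first line of \eqref{obmu}. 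For a ``diametric'' pair such as $\{x,Q^{2}x\}$ the angle is $\psi$ and one finds $\tilde{g}(x,Q^{2}x)=g(x,x)$, $\tilde{g}(x,x)=\tilde{g}(Q^{2}x,Q^{2}x)=g(x,x)\cos\psi$, $g(x,Q^{2}x)=g(x,x)\cos\psi$, so the two terms in $\pi_{2}(x,Q^{2}x,x,Q^{2}x)$ cancel; the same cancellation occurs for $\{Qx,Q^{3}x\}$. Only the universal contribution $-\frac{\tau+\tau^{*}}{4}$ survives, giving the second line of \eqref{obmu}.

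The main obstacle is purely bookkeeping: keeping track of the eight $g$- and $\tilde{g}$-entries for each of the six pairs without an arithmetic slip. The $Q$-invariance of $g$ and the relation $P=Q^{2}$ make the table highly symmetric, so organising the computation around the two quantities $g(x,Q^{k}x)$ for $k\in\{1,2\}$ (and their $\tilde{g}$-counterparts $g(x,Q^{k+2}x)$) should collapse the six cases into the two displayed in \eqref{obmu}.
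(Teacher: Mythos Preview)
Your proposal is correct and follows essentially the same approach as the paper: both use the expression \eqref{fR} for $R$ from Theorem~\ref{th2.4}, evaluate all the needed $g$- and $\tilde g$-values on the vectors $x,Qx,Q^{2}x,Q^{3}x$ via the $Q$-invariance of $g$ and the relation $\tilde g(u,v)=g(u,Q^{2}v)$, and then substitute into \eqref{3.3}. Your write-up is in fact more detailed than the paper's, which simply records the table of inner products and states that applying \eqref{fR}, \eqref{pi-pi} and \eqref{g-cos} in \eqref{3.3} yields \eqref{obmu}; your explicit isolation of the $\pi_{1}$-contribution as the universal $-\frac{\tau+\tau^{*}}{4}$ term is a nice organizational touch.
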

\begin{proof}
Let a vector $x$ induce a $Q$-basis. The conditions \eqref{g-suglasuvane} and \eqref{varphi} imply \begin{equation}\label{g-cos}
\begin{split}
&g(x, Qx)=g(Qx, Q^{2}x)=g(Q^{2}x, Q^{3}x)=g(x, Q^{3}x)=g(x, x)\cos\varphi,\\
&g(x, Q^{2}x)=g(Qx, Q^{3}x)=g(x, x)\cos\psi.
\end{split}
\end{equation}
 Hence, from \eqref{g-suglasuvane}, \eqref{metricf}, \eqref{varphi}, \eqref{g-cos2} and \eqref{g-cos}, we find
 \begin{equation*}%\label{tilde-g2}
  \begin{split}
   &\tilde{g}(x, x)=\tilde{g}(Qx, Qx)=\tilde{g}(Q^{2}x, Q^{2}x)=\tilde{g}(Q^{3}x,Q^{3}x)=g(x, x)\cos\psi,\\ &\tilde{g}(x, Q^{2}x)=\tilde{g}(Qx, Q^{3}x)=g(x,x),\\&
   \tilde{g}(x, Qx)=\tilde{g}(x, Q^{3}x)=\tilde{g}(Qx, Q^{2}x)=\tilde{g}(Q^{2}x, Q^{3}x)=g(x,x)\cos\varphi.
  \end{split}
 \end{equation*}
Then applying \eqref{fR}, \eqref{pi-pi} and \eqref{g-cos} in \eqref{3.3}, we obtain \eqref{obmu}.
\end{proof}
\begin{corollary}
If the Ricci tensor on $(M, g, Q)$ has the form \eqref{E2}, then the sectional curvatures of the 2-planes, determined by the basis vectors, are
\begin{equation*}
\begin{split}
    k(x,Qx)&=k(Qx,Q^{2}x)= k(x,Q^{3}x)=k(Q^{2}x,Q^{3}x)\\&=
    k(x,Q^{2}x)= k(Qx,Q^{3}x)=-\frac{\tau}{6}.
    \end{split}
\end{equation*}
\end{corollary}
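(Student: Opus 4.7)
The corollary follows immediately from Theorem~\ref{t2.5} by specialization, so the plan is a short algebraic substitution rather than anything structural. My strategy is to invoke the formulas \eqref{obmu} and use the Einstein-type identity \eqref{tau-E}, namely $\tau^{*}=-\tau/3$, to collapse both cases to a single constant value.

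First I would note that hypothesis \eqref{E2} is the Einstein specialization of the almost Einstein form \eqref{rho-AE}, so the earlier corollary applies and we may replace $\tau^{*}$ by $-\tau/3$ throughout. In particular, the elementary computations
\[
\tau+\tau^{*}=\tau-\tfrac{\tau}{3}=\tfrac{2\tau}{3},\qquad \tau+3\tau^{*}=\tau-\tau=0
\]
are the only facts we need. Substituting the second identity into the expression
\[
k(x,Qx)=-\frac{\tau+\tau^{*}}{4}+(\cos^{2}\varphi-\cos\psi)\frac{\tau+3\tau^{*}}{4(1-\cos^{2}\varphi)}
\]
from \eqref{obmu} makes the trigonometric correction term vanish identically, independently of $\varphi$ and $\psi$. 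Substituting the first identity then gives $k(x,Qx)=-\tau/6$, and the same value is obtained for $k(Qx,Q^{2}x)$, $k(x,Q^{3}x)$, and $k(Q^{2}x,Q^{3}x)$ since these share the same formula in \eqref{obmu}.

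For the remaining two planes, \eqref{obmu} already gives $k(x,Q^{2}x)=k(Qx,Q^{3}x)=-(\tau+\tau^{*})/4$, and the first identity above turns this directly into $-\tau/6$. Combining both cases yields the common value asserted in the statement. There is no genuine obstacle here, since Lemma~\ref{angles} already guarantees $\varphi\in(0,\pi/2)$ so that $1-\cos^{2}\varphi\neq 0$ and the quotient appearing in \eqref{obmu} is well defined before we apply the vanishing $\tau+3\tau^{*}=0$; the only point worth recording is this automatic non-degeneracy, which ensures the substitution is valid on every $Q$-basis.
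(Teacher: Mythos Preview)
Your proof is correct and follows exactly the paper's approach: the paper's proof is the single line ``These equalities are obtained by substituting \eqref{tau-E} into \eqref{obmu},'' and you have simply spelled out that substitution in detail, including the helpful remark that $\tau+3\tau^{*}=0$ kills the trigonometric term and that Lemma~\ref{angles} guarantees $1-\cos^{2}\varphi\neq 0$.
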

\begin{proof}
These equalities are obtained by substituting \eqref{tau-E} into \eqref{obmu}.
\end{proof}

\section{Hypersurface as an example of the manifolds}\label{Sec:6}

Let $\mathbb{E}^{4}$ be the four-dimensional Euclidean space $(\mathbb{R}^{4}, <\cdot,\cdot>)$ with the usual inner product $$<X,Y>=X^{1}Y^{1}+X^{2}Y^{2}+X^{3}Y^{3}+X^{4}Y^{4},$$
where $X(X^{1}, X^{2}, X^{3}, X^{4})$ and $Y(Y^{1}, Y^{2}, Y^{3}, Y^{4})$ are vectors in $\mathbb{R}^{4}$.

We consider a 3-dimensional catenoid $S\in\mathbb{E}^{4}$ with parametrization \begin{equation}\label{catenoid}
S:r(\cosh u\cos v, \cosh u\sin v,u\cos w,u\sin w), \ u\neq 0, \ v, w\in [0, 2\pi).\end{equation}
We denote by $\partial_{1}=\frac{\partial r}{\partial u}$, $\partial_{2}=\frac{\partial r}{\partial v}$, $\partial_{3}=\frac{\partial r}{\partial w}$ the local basis vectors. The coefficients of the first fundamental form are as follows: \begin{equation*}%\label{gij}
\begin{split}
g_{11}&=<\partial_{1},\partial_{1}>=\cosh^{2}u,\ g_{22}=<\partial_{2},\partial_{2}>=\cosh^{2}u,\\& g_{33}=<\partial_{3},\partial_{3}>=u^{2},\ g_{12}=g_{23}=g_{13}=<\partial_{i},\partial_{j}>=0.\end{split}\end{equation*}
Therefore the manifold $(S, g, Q)$, where the metric $g$ is determined by the above equalities and $Q$ is determined by \eqref{Q}, is of the considered type $(M, g, Q)$.

We use $e_{i}=\frac{1}{\sqrt{<\partial_{i}, \partial_{i}>}}\partial_{i}$ and find orthonormal basis vectors
\begin{equation*}
e_{1}=\frac{1}{\cosh u}\partial_{1},\ e_{2}=\frac{1}{\cosh u}\partial_{2},\ e_{3}=\frac{\varepsilon}{u}\partial_{3},\ \varepsilon=\sgn(u),\end{equation*}
i.e. the components of the metric $g$ with respect to $\{e_i\}$ are:
\begin{equation}\label{g-e}
\begin{array}{l}
g(e_{1}, e_{1})= g(e_{2}, e_{2})=g(e_{3}, e_{3})=1,\\
g(e_{1}, e_{2})= g(e_{2}, e_{3})=g(e_{1}, e_{3})=0.
\end{array}
\end{equation}
Now we calculate the commutators
\begin{equation}\label{skobki}
  [e_{1}, e_{2}]=-\frac{\sinh u}{\cosh^{2}u}e_{2},\ [e_{1}, e_{3}]=-\frac{1}{u\cosh u}e_{3},\ [e_{2}, e_{3}]=0.
\end{equation}
The well-known Koszul formula implies
\begin{equation*}
    2g(\nabla_{e_{i}}e_{j}, e_{k})=g([e_{i}, e_{j}],e_{k})+g([e_{k}, e_{i}],e_{j})+g([e_{k}, e_{j}],e_{i})
\end{equation*}
 and, using \eqref{g-e} and \eqref{skobki}, we obtain
\begin{equation}\label{nabla-e}
\begin{split}
\nabla_{e_{2}}e_{1}&=\frac{\sinh u}{\cosh^{2}u}e_{2},\
   \nabla_{e_{2}}e_{2}=-\frac{\sinh u}{\cosh^{2}u}e_{1},\\ \nabla_{e_{3}}e_{1}&=\frac{1}{u\cosh u}e_{3},\
\nabla_{e_{3}}e_{3}=-\frac{1}{u\cosh u}e_{1}.
\end{split}
\end{equation}
The structure $Q$ acts on $\{e_{i}\}$ as follows:
\begin{equation}\label{Qe}
  Qe_{1}=e_{2},\ Qe_{2}=-e_{1},\ Qe_{3}=e_{3}.
\end{equation}
Then for the structure $P=Q^{2}$ we have
\begin{equation}\label{Pe}
  Pe_{1}=-e_{1},\ Pe_{2}=-e_{2},\ Pe_{3}=e_{3}.
\end{equation}
The components of the associated metric $\tilde{g}$ are:
\begin{equation}\label{tilde-g}
\begin{array}{ll}
\tilde{g}(e_{1}, e_{1})= \tilde{g}(e_{2}, e_{2})=-1,\ \tilde{g}(e_{3}, e_{3})=1,\\
\tilde{g}(e_{1}, e_{2})= \tilde{g}(e_{2}, e_{3})=\tilde{g}(e_{1}, e_{3})=0.
\end{array}
\end{equation}
By using \eqref{F}, \eqref{nabla-e}, \eqref{Pe} and \eqref{tilde-g}
we get the components $F_{ijk}$ of $F$, $\theta_i$ of $\theta$ and  $\widetilde{\theta}_i=P_{i}^{a}\theta_{a}$ with respect to $\{e_i\}$.
The nonzero of them are as follows:
\begin{equation}\label{F313}
  F_{313}=-\frac{2}{u\cosh u},\quad
 \theta_{1}=-\frac{2u}{\cosh u},\quad
 \widetilde{\theta}_{1}=\frac{2u}{\cosh u}.
\end{equation}
Hence \eqref{g-e}, \eqref{tilde-g} and \eqref{F313} imply that the condition
\eqref{usl-w1} holds. So, we state the following
\begin{proposition}\label{kt2}
If $(S, g, P)$ is defined by \eqref{catenoid}, \eqref{g-e} and \eqref{Pe}, then the fundamental tensor $F$ on $(S, g, P)$ satisfies the property \eqref{c1}.
\end{proposition}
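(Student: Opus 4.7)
The plan is to verify the pointwise identity \eqref{usl-w1} --- which Theorem~\ref{tw1} identifies as the local form of \eqref{c1} --- directly in the orthonormal frame $\{e_i\}$, using the data already assembled in \eqref{g-e}, \eqref{tilde-g}, and \eqref{F313}. Once \eqref{usl-w1} is shown to hold at every point of $S$, the proposition follows. Strictly speaking, since the catenoid $(S,g,Q)$ is a specific instance of the abstract $(M,g,Q)$ introduced in Section~\ref{Sec:2}, Theorem~\ref{tw1} already applies and gives \eqref{c1} without further work; the direct check then plays the role of a consistency test of the computations in Section~\ref{Sec:6}.

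First, I would record the structure imposed by the orthonormal frame: $(g_{ij})$ is the identity, $(\tilde g_{ij})=\mathrm{diag}(-1,-1,1)$, the only essentially nonvanishing component of $F$ is $F_{313}$ together with $F_{331}=F_{313}$ by the symmetry in \eqref{F-prop}, and $\theta$, $\widetilde\theta$ are supported only at the first index, with $\widetilde\theta_1=-\theta_1$ by \eqref{theta*}.

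Next, I would split the verification of \eqref{usl-w1} into cases according to the indices $(j,k)$. When neither $j$ nor $k$ equals $1$, both sides vanish by the support of $\theta$ and $F$. When $j=k=1$, the combination $g_{ii}+\tilde g_{ii}=0$ for $i\in\{1,2\}$ together with $\theta_1+\widetilde\theta_1=0$ collapses the right-hand side to zero; the left-hand side $F_{i11}$ vanishes by the second identity in \eqref{F-prop}. Mixed cases such as $j=1$, $k\notin\{1\}$ also vanish on both sides because the off-diagonal entries of $g$ and $\tilde g$ vanish in the orthonormal frame while $F_{ijk}=0$ by support. The one nontrivial triple is $(i,j,k)=(3,1,3)$, and its partner $(3,3,1)$, where the right-hand side collapses to $\tfrac{1}{2}(\theta_1-\widetilde\theta_1)=\theta_1$; and in the orthonormal frame $\theta_1=\sum_a F_{aa1}=F_{331}=F_{313}$ by the definition in \eqref{F} combined with the single-support observation, so the identity is satisfied.

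The main obstacle is not conceptual but organisational: the cases must be enumerated carefully so as not to overlook a nonvanishing combination on one side while the other vanishes. Once every index triple has been checked, the tensorial identity \eqref{usl-w1} holds pointwise on $S$, and an appeal to Theorem~\ref{tw1} converts this into \eqref{c1}, completing the proof.
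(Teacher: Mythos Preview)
Your approach is essentially the same as the paper's: the paper simply asserts that \eqref{g-e}, \eqref{tilde-g} and \eqref{F313} imply \eqref{usl-w1}, and you carry out that verification explicitly in the orthonormal frame. Your additional observation --- that $(S,g,Q)$ is a particular instance of the abstract $(M,g,Q)$ of Section~\ref{Sec:2}, so Theorem~\ref{tw1} already yields \eqref{c1} without any computation --- is correct and is in fact the cleanest route to the result; the direct check is, as you say, a consistency test. One small slip: in the $j=k=1$ case the relevant combination on the right of \eqref{usl-w1} is $g_{i1}+\tilde g_{i1}$, not $g_{ii}+\tilde g_{ii}$; the conclusion is unaffected since both vanish for all $i$.
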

Next we get
\begin{proposition}\label{ro-primer}
 Let $(S, \tilde{g}, Q)$ and $(S, g, Q)$  be the manifolds defined by \eqref{catenoid}, \eqref{g-e}, \eqref{Qe} and \eqref{tilde-g}. For the Ricci tensors  $\rho$ and $\tilde{\rho}$ and for the scalar quantities $\tau$, $\tau^{*}$, $\tilde{\tau}$ and $\tilde{\tau}^{*}$ the identity \eqref{con-AE} holds.
\end{proposition}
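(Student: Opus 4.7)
The plan is to observe that the catenoid example $(S, g, Q)$ is a particular instance of the general manifold $(M, g, Q)$ constructed in Section~\ref{Sec:2}, with the local coordinates $(u,v,w)$ playing the role of $(x^{1}, x^{2}, x^{3})$ and the metric functions given by $A = \cosh^{2} u$, $B = u^{2}$. Since Theorem~\ref{connR-R} is proved for arbitrary such manifolds (its derivation relies only on the identity \eqref{usl-w1}, which is verified for the catenoid in Proposition~\ref{kt2}), the identity \eqref{con-AE} applies verbatim. This gives the quickest proof, essentially by citation.

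For a self-contained verification I would instead compute both sides of \eqref{con-AE} directly in the orthonormal frame $\{e_{1}, e_{2}, e_{3}\}$ used in the example. First, inserting \eqref{skobki} and \eqref{nabla-e} into the definition \eqref{R-def} produces the nonzero components of $R$ in $\{e_{i}\}$; contracting via \eqref{def-rho} then yields $\rho_{ij}$. Because $\{e_{i}\}$ is $g$-orthonormal one has $g^{ij} = \delta^{ij}$, so $\tau$ is just the trace of $\rho$; using \eqref{tilde-g} one reads off $\tilde{g}^{11} = \tilde{g}^{22} = -1$, $\tilde{g}^{33} = 1$, which immediately delivers $\tau^{*} = \tilde{g}^{ij}\rho_{ij}$. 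For the tilded side, rather than redoing the Christoffel computation for $\tilde{g}$ from scratch, I would exploit $\tilde{\Gamma} = \Gamma + T$ with $T$ given by \eqref{torsion}, since the 1-forms $\theta$ and $\widetilde{\theta}$ are already recorded in \eqref{F313}. From $\tilde{\Gamma}$ one obtains $\tilde{R}$, then $\tilde{\rho}$, and finally $\tilde{\tau}$, $\tilde{\tau}^{*}$ by contraction with $\tilde{g}^{ij}$ and $g^{ij}$ respectively.

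The last step is substitution: plug the six scalars and the two Ricci tensors into \eqref{con-AE} and check equality component by component. The main obstacle is purely bookkeeping—the indefiniteness of $\tilde{g}$ introduces sign flips that must be tracked carefully when forming $\tilde{R}$ and $\tilde{\rho}$, and there are enough nonzero components (governed by the $u$-dependence of $A$ and $B$) that a direct verification is tedious. This is precisely why appealing to Theorem~\ref{connR-R} via Proposition~\ref{kt2} is the cleaner path, and it is the route I would present as the proof.
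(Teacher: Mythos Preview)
Your primary route---recognising that $(S,g,Q)$ is literally an instance of the general $(M,g,Q)$ with $A=\cosh^{2}u$, $B=u^{2}$ in coordinates $(u,v,w)$, and then invoking Theorem~\ref{connR-R}---is logically correct and is \emph{not} the route the paper takes. The paper proves the proposition by an independent direct computation: it builds $R$ from \eqref{nabla-e}, extracts $\rho$, $\tau$, $\tau^{*}$, then recomputes $\tilde{\nabla}$ from scratch via the Koszul formula applied with $\tilde{g}$ (rather than through the deformation tensor~$T$), obtains $\tilde{R}$, $\tilde{\rho}$, $\tilde{\tau}$, $\tilde{\tau}^{*}$, and finally substitutes everything into \eqref{con-AE} to check equality. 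Your citation argument is cleaner and shorter; the paper's explicit computation, on the other hand, serves as a genuine consistency check of Theorem~\ref{connR-R} and, more importantly, produces the concrete values $R_{ijks}$, $\rho_{ij}$, $\tau$, $\tau^{*}$, $\tilde{R}_{ijks}$, $\tilde\tau$, $\tilde\tau^{*}$ that are reused in the subsequent proposition on sectional curvatures. So if the intent of the example section is illustration and verification, your shortcut sacrifices that; if the intent is merely to establish the proposition, your argument is the efficient one. Your alternative ``self-contained'' plan coincides with the paper's computation except that you would obtain $\tilde{\nabla}$ via $\tilde{\Gamma}=\Gamma+T$ with $T$ from \eqref{torsion}, whereas the paper re-applies Koszul directly; both are valid, though using \eqref{torsion} partially re-imports the general machinery you were trying to avoid.
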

\begin{proof}
We calculate the components $R_{ijks}$ of the curvature tensor $R$ with respect to $\{e_i\}$, having in mind \eqref{R-def}, \eqref{R-4}, \eqref{nabla-e} and the symmetries of $R$. The nonzero of them are:
\begin{align}\label{r-primer}
  R_{1212}=\frac{1}{\cosh^{4}u},\ R_{2323}=\frac{\sinh u}{u\cosh^{3}u},\ R_{1313}=-\frac{\sinh u}{u\cosh^{3}u}.
\end{align}
Using \eqref{def-rho} and \eqref{def-rho*} we compute the components $\rho_{ij}$ of $\rho$ and the values of $\tau$ and $\tau^{*}$. The nonzero of them are as follows:
\begin{equation}\label{ro}
\begin{split}
    \rho_{11}&=-\frac{1}{\cosh^{4}u}+\frac{\sinh u}{u\cosh^{3}u},\quad \rho_{22}=-\frac{1}{\cosh^{4}u}-\frac{\sinh u}{u\cosh^{3}u},\quad \rho_{33}=0,\\ \tau^{*}&=\frac{2}{\cosh^{4}u},\quad
    \tau =-\frac{2}{\cosh^{4}u}.
    \end{split}
\end{equation}
 From the Koszul formula applied to \eqref{skobki} and  \eqref{tilde-g}, we obtain
\begin{equation*}%\label{nabla-tilde-e}
\begin{split}
\tilde{\nabla}_{e_{2}}e_{1}&=\frac{\sinh u}{\cosh^{2}u}e_{2},\
   \tilde{\nabla}_{e_{2}}e_{2}=-\frac{\sinh u}{\cosh^{2}u}e_{1},\\ \tilde{\nabla}_{e_{3}}e_{1}&=\frac{1}{u\cosh u}e_{3},\
\tilde{\nabla}_{e_{3}}e_{3}=\frac{1}{u\cosh u}e_{1}.
\end{split}
\end{equation*}
By the above equalities we calculate the components $\tilde{R}_{ijks}$ of the curvature tensor $\tilde{R}$ with respect to $\{e_i\}$, and the nonzero of them are:
\begin{align}\label{r-tilde}
  \tilde{R}_{1212}=-\frac{1}{\cosh^{4}u},\ \tilde{R}_{2323}=\frac{\sinh u}{u\cosh^{3}u},\ \tilde{R}_{1313}=-\frac{\sinh u}{u\cosh^{3}u}.
\end{align}
Using \eqref{def-rho2}, \eqref{def-rho*} and \eqref{r-tilde}, we compute the components $\tilde{\rho}_{ij}$ of $\tilde{\rho}$ and the values of $\tilde{\tau}$ and $\tilde{\tau}^{*}$. The nonzero of them are as follows:
\begin{equation*}
\begin{split}
    \tilde{\rho}_{11}&=-\frac{1}{\cosh^{4}u}+\frac{\sinh u}{u\cosh^{3}u},\quad \tilde{\rho}_{22}=-\frac{1}{\cosh^{4}u}-\frac{\sinh u}{u\cosh^{3}u},\quad \tilde{\rho}_{33}=0,\\ \tilde{\tau}^{*}&=-\frac{2}{\cosh^{4}u},\quad
    \tilde{\tau}=\frac{2}{\cosh^{4}u}.
    \end{split}
\end{equation*}
Then, from \eqref{g-e}, \eqref{tilde-g} and \eqref{ro}, we obtain \begin{equation*}
     \tilde{\rho}_{ij} = \rho_{ij}+\frac{1}{8}(3\tilde{\tau}^{*}+\tilde{\tau}-3\tau-\tau^{*})g_{ij}+\frac{1}{8}(3\tilde{\tau}+\tilde{\tau}^{*}-3\tau^{*}-\tau)\tilde{g}_{ij},
\end{equation*}
which is a local form of \eqref{con-AE}.
\end{proof}
Now, with the help of \eqref{3.3}, \eqref{g-e}, \eqref{tilde-g}, \eqref{r-primer} and \eqref{r-tilde}, we get the following
\begin{proposition}
The sectional curvatures of the basic 2-planes of the manifolds $(S, g, Q)$ and $(S, \tilde{g}, Q)$, are
\begin{align*}
  k(e_{1},e_{2})=\frac{1}{\cosh^{4}u},\ k(e_{2},e_{3})=\frac{\sinh u}{u\cosh^{3}u},\ k(e_{1},e_{3})=-\frac{\sinh u}{u\cosh^{3}u},
\end{align*}
\begin{align*}
  \tilde{k}(e_{1},e_{2})=-\frac{1}{\cosh^{4}u},\ \tilde{k}(e_{2},e_{3})=-\frac{\sinh u}{u\cosh^{3}u},\ \tilde{k}(e_{1},e_{3})=\frac{\sinh u}{u\cosh^{3}u}.
\end{align*}
\end{proposition}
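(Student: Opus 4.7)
The plan is to apply the sectional curvature formula \eqref{3.3} directly to each of the three coordinate 2-planes spanned by pairs from $\{e_1,e_2,e_3\}$, separately for $g$ and for $\tilde{g}$. Since all relevant quantities (components of the metrics and of the curvature tensors in the frame $\{e_i\}$) have already been recorded in \eqref{g-e}, \eqref{tilde-g}, \eqref{r-primer} and \eqref{r-tilde}, this is a direct substitution rather than a genuine computation.

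First I would observe that \eqref{g-e} says $\{e_i\}$ is $g$-orthonormal, so for every pair $i\neq j$ the Gramian
\[
g(e_i,e_i)\,g(e_j,e_j)-g(e_i,e_j)^2 = 1.
\]
Therefore $k(e_i,e_j)=R_{ijij}$, and reading $R_{1212}$, $R_{2323}$, $R_{1313}$ from \eqref{r-primer} gives precisely the three values claimed for $k$.

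Next, for the pseudo-Riemannian case, \eqref{tilde-g} shows that $\{e_i\}$ is $\tilde{g}$-orthogonal with $\tilde{g}(e_1,e_1)=\tilde{g}(e_2,e_2)=-1$ and $\tilde{g}(e_3,e_3)=1$. The corresponding Gramians are
\[
\tilde{g}(e_1,e_1)\tilde{g}(e_2,e_2)=1,\qquad \tilde{g}(e_1,e_1)\tilde{g}(e_3,e_3)=\tilde{g}(e_2,e_2)\tilde{g}(e_3,e_3)=-1,
\]
all nonzero, so the three 2-planes are non-degenerate and the sectional curvature is defined. Then $\tilde{k}(e_i,e_j)=\tilde{R}_{ijij}/(\text{Gramian})$, and substituting the values from \eqref{r-tilde} yields the stated expressions for $\tilde{k}$, with the sign flip on the $(e_1,e_3)$ and $(e_2,e_3)$ planes coming entirely from the $-1$ in the denominator.

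There is no real obstacle here; the only point that would need a sentence is the non-degeneracy check for $\tilde{g}$ on each 2-plane, which is immediate from the diagonal form of $(\tilde{g}_{ij})$ in the frame $\{e_i\}$. The proof is therefore essentially a three-line substitution for each of the two metrics.
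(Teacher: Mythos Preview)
Your proposal is correct and follows exactly the same approach as the paper, which also obtains the proposition directly from \eqref{3.3}, \eqref{g-e}, \eqref{tilde-g}, \eqref{r-primer} and \eqref{r-tilde}. Your explicit computation of the Gramians and the non-degeneracy check for $\tilde g$ simply make visible the one-line substitution the paper leaves implicit.
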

\begin{figure}[h]
\begin{center}
\includegraphics[width=0.4\paperwidth]{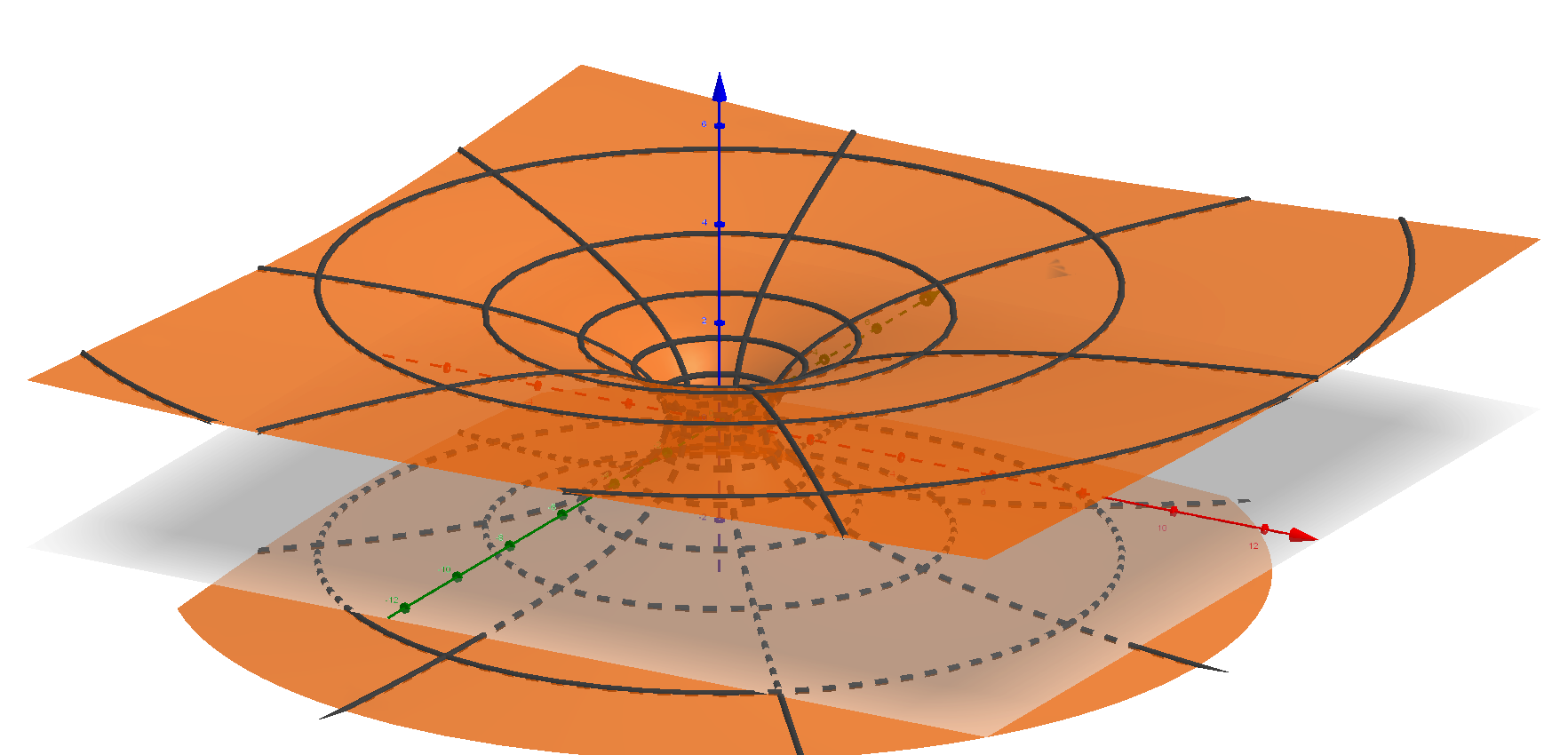}
\caption{Intersection with $Ox^{1}x^{2}x^{3}$}\label{fig1}
\end{center}
\end{figure}
\begin{figure}[h]
\begin{center}
\includegraphics[width=0.4\paperwidth]{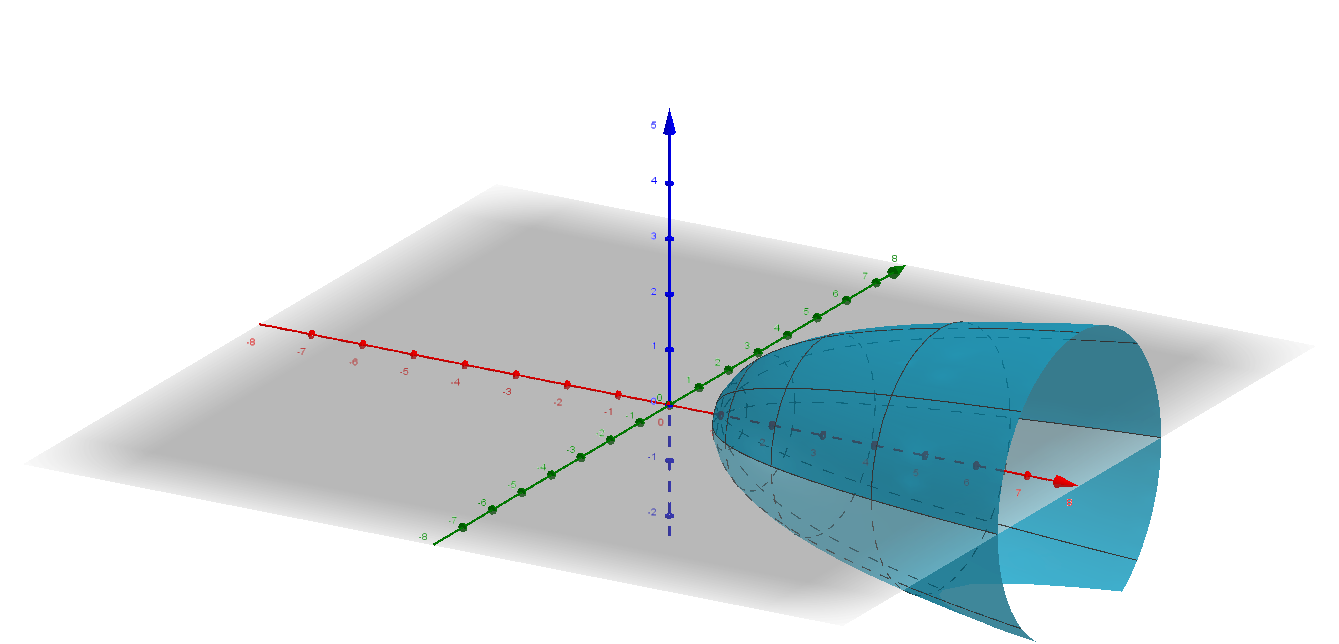}
\caption{Intersection with $Ox^{1}x^{3}x^{4}$}\label{fig2}
\end{center}
\end{figure}

\begin{remark} The intersections of $S$ with coordinate planes $Ox^{1}x^{2}x^{3}$ and $Ox^{1}x^{3}x^{4}$ are
the surfaces $S_{1}:r(\cosh u\cos v, \cosh u\sin v,u)$, $v\in [0, 2\pi)$ and $S_{2}:r(\cosh u,u\cos w,u\sin w)$,  $w\in [0, 2\pi)$, respectively. Their graphs are given in Figure~\ref{fig1} and Figure~\ref{fig2}.\end{remark}
\begin{remark} The idea for this example is inspired by papers like \cite{aydin}, \cite{erkan}, \cite{manev}.

The considered manifolds $(S, g, Q)$ and $(S, \tilde{g}, Q)$ are not Einstein or almost Einstein. Therefore, we will look for such examples in our future work.
\end{remark}

%----------------------------------------------------------------------------------------

\end{document}